\documentclass[12pt]{article}

\usepackage{geometry}
\usepackage[T1]{fontenc}
\usepackage{lmodern}
\usepackage{microtype}
\usepackage{mathtools,amssymb,amsthm,mathrsfs}
\usepackage{enumitem}
\usepackage{xcolor}
\usepackage{booktabs}
\usepackage{hyperref}
\usepackage[nameinlink,noabbrev]{cleveref}

\hypersetup{
  colorlinks=true,
  linkcolor=blue!55!black,
  citecolor=green!40!black,
  urlcolor=blue!60!black,
  pdftitle={Todd's relation conjecture and binary relations for multiple zeta values in positive characteristic},
  pdfsubject={Multiple zeta values and Carlitz multiple polylogarithms in positive characteristic}
}

\setlist[enumerate,1]{label=\textup{(\roman*)},leftmargin=2.2em}
\setlist[itemize]{leftmargin=1.8em}
\numberwithin{equation}{section}

\newtheorem{theorem}{Theorem}[section]

\newtheorem{lemma}[theorem]{Lemma}
\newtheorem{corollary}[theorem]{Corollary}

\theoremstyle{definition}
\newtheorem{definition}[theorem]{Definition}

\theoremstyle{remark}
\newtheorem{remark}[theorem]{Remark}
\newtheorem*{acknowledgements}{\bf Acknowledgements}

\DeclareMathOperator{\Span}{Span}

\newcommand{\Fq}{\mathbb F_q}
\newcommand{\K}{K}

\newcommand{\Li}{\mathrm{Li}}
\newcommand{\Si}{\mathrm{Si}}
\newcommand{\rS}{\mathrm{S}}

\newcommand{\Fix}{\operatorname{Fix}^{\rS}}
\newcommand{\iFix}{\operatorname{Fix}^{\Si}}
\newcommand{\cB}{\mathcal B^{\rS}}
\newcommand{\cBs}{\mathcal B^{\ast \rS}}
\newcommand{\cC}{\mathcal C^{\rS}}
\newcommand{\icB}{\mathcal B^{\Si}}
\newcommand{\icBs}{\mathcal B^{\ast \Si}}
\newcommand{\icC}{\mathcal C^{\Si}}
\newcommand{\cR}{\mathscr R}
\newcommand{\cS}{\mathscr S}
\newcommand{\cZ}{\mathcal Z}
\newcommand{\cI}{\mathcal I}
\newcommand{\br}{\mathfrak{BR}^{\rS}}
\newcommand{\ibr}{\mathfrak{BR}^{\Si}}

\newcommand{\bbZ}{\mathbb Z}
\newcommand{\bbR}{\mathbb R}

\newcommand{\wt}{\operatorname{wt}}
\newcommand{\dep}{\operatorname{dep}}
\newcommand{\ts}{\operatorname{T}_{\rS}}
\newcommand{\tsi}{\operatorname{T}_{\Si}}
\newcommand{\fs}{\mathfrak{s}}
\newcommand{\fh}{\mathfrak{h}}
\newcommand{\fn}{\mathfrak{n}}
\newcommand{\fm}{\mathfrak{m}}
\newcommand{\ft}{\mathfrak{t}}
\newcommand{\fu}{\mathfrak{u}}

\newcommand{\fa}{\mathfrak{a}}
\newcommand{\fb}{\mathfrak{b}}
\newcommand{\fc}{\mathfrak{c}}
\newcommand{\fd}{\mathfrak{d}}
\newcommand{\BTTw}{\mathcal{B}_w^{(\textup{TT})}}
\newcommand{\Ss}{\ast_{\rS}}
\newcommand{\St}{\triangleleft_{\rS}}
\newcommand{\Sc}{\diamond_{\rS}}
\newcommand{\Sis}{\ast_{\Si}}
\newcommand{\Sit}{\triangleleft_{\Si}}
\newcommand{\Sic}{\diamond_{\Si}}

\title{Todd's relation conjecture and binary relations for\\
multiple zeta values in positive characteristic}
\author{Jinyuan Hu}
\date{}

\begin{document}

\maketitle

\begin{abstract}
We prove Todd's relation conjecture:
all $\Fq(\theta)$-linear relations of Thakur's multiple
zeta values are generated from
the fundamental binary relation by the operators $\cB,\cC,\cB\circ \cC$; moreover they
are also generated by $\cBs,\cC,\cBs\circ\cC$. The $\cBs$-part of this conjecture has been proved by Chang, Chen and Mishiba. We prove the whole conjecture for Carlitz multiple polylogarithm values, which implies the $\cB$-part for multiple
zeta values.

We also determine all fixed relations and binary relations. Let $\br_w$ be the $\Fq(\theta)$-linear
space spanned by binary relations of weight $w$, and let $\Fix_w$ be the $\Fq(\theta)$-linear
space spanned by fixed relations. We prove that
\begin{align*}
 \sum_{w\geq1}(\dim_{\Fq(\theta)}\Fix_w)x^w
 &=\frac{x^{q+1}(1-x)}{(1-2x)(1-2x+x^{q+1})},\\
 \sum_{w\geq1}(\dim_{\Fq(\theta)}\br_w)x^w
 &=\frac{x^q(1-x)}{(1-2x)(1-2x+x^{q+1})}.
\end{align*}

Our results are based on the recent work of Im–Kim–
Ngo Dac on the $\Fq$-linear relations of Thakur’s multiple zeta values, and a system of transfer theorems between multiple zeta values and multiple polylogarithm values.
\end{abstract}

\section{Introduction}
For any non-empty set $A$, we denote by $A^{\bullet}$ the disjoint union $\{\emptyset\} \sqcup_{k=1}^{\infty} A^k$. 
Let $\bbZ_+$ be the set of positive integers, and denote by $\cI$ the set $\bbZ_+^{\bullet}$.
The elements in $\cI$ are called \emph{tuples}.
For any tuple $\fs =(s_1,\ldots,s_r) \in \cI$, the \emph{depth} $\dep(\fs)$ and \emph{weight} $\wt(\fs)$ are defined by $\dep(\fs) = r$ and $\wt(\fs)=\sum_{i=1}^{r} s_i$, with the convention $\dep(\emptyset)=0$ and $\wt(\emptyset)=0$.
Define $\cI_w = \left\{ \fs \in \cI \mid \wt(\fs) = w \right\}$ for any weight $w \ge 0$, and define $\cI_{>0} = \cI \setminus \{\emptyset\}$.
For $w\in\bbZ_+$ and a field $F$, we denote the $F$-linear formal sum space of tuples of weight $w$ by $\cS_w^F=\bigoplus_{\fs\in\cI_w} F\fs$, and $\cS^F=\bigoplus_{\fs\in\cI} F\fs$.
As usual, $[n]$ denotes the set $\{1,2,\ldots,n\}$ for any $n \in \bbZ_+$, and $[0]=\emptyset$.

\subsection{Classical multiple zeta values}

For any tuple $\fs = (s_1,\ldots,s_r) \in \cI_{>0}$ such that $s_1>1$, the \emph{multiple zeta value} (abbreviated as MZV) $\zeta(\fs)$ is defined by the convergent nested sum
\[
\zeta(\fs) \coloneq \sum_{\substack{n_1 > n_2 > \cdots > n_r \\ n_1,n_2,\ldots,n_r \in \bbZ_+}} \frac{1}{n_1^{s_1}n_2^{s_2}\cdots n_r^{s_r}} \in \bbR.
\]
By convention, $\zeta(\emptyset) \coloneq 1$. 
These values generalize the special values $\zeta(s)$ for integers $s > 1$ of the Riemann zeta function.

There exist two products on the $\mathbb Q$-span of classical multiple zeta values, namely the shuffle and stuffle products, which give rise to numerous $\mathbb Q$-linear relations. After regularization, these lead to the extended double-shuffle relations, which are conjectured to generate all relations among classical multiple zeta values. This conjecture remains open; see \cite{Brown2012,Hoffman1992,IKZ2006,Zagier1994}.

\subsection{Thakur's function-field multiple zeta values in positive characteristic}

Let $A=\Fq[\theta]$ be the polynomial ring in one variable $\theta$ over a finite field $\Fq$ of characteristic $p$, and let $A_+$ be the set of monic polynomials in $A$. Let $K=\Fq(\theta)$ be its fraction field, and $K_\infty=\Fq((\theta^{-1}))$ be the completion of $K$ at $\infty$.

For a tuple $\fs = (s_1,\ldots,s_r) \in \cI_{>0}$ and $d\in\bbZ_{\geq0}$, define the power sum $\rS_d(\fs)$ as
\[
 \rS_d(\fs)=
 \sum_{\substack{a_1,\ldots,a_r\in A_+\\
                  d=\deg a_1>\cdots>\deg a_r}}
 \frac{1}{a_1^{s_1}\cdots a_r^{s_r}}\in K.
\]
For convenience, we set $\rS_{-1}(\emptyset)=1$, $\rS_{d}(\emptyset)=0$ for $d\in\bbZ\setminus\{-1\}$, and $\rS_d(\fs)=0$ for $d<\dep(\fs)-1$.

We further define
\[
\rS_{<d}(\fs)=\sum_{k<d}\rS_k(\fs).
\]
If we set $\fs=(s_1,\fs^-)$ (possibly $\fs^-=\emptyset$), then
\[
\rS_d(\fs)=\rS_d(s_1)\rS_{<d}(\fs^-).
\]

Now we define the multiple zeta values in positive characteristic introduced by Thakur \cite{Thakur2004} as follows:
\[
\zeta(\fs)=\sum_{d\geq0}\rS_d(\fs)\in K_{\infty}
\]

For $\rS_d(\fs), \rS_{<d}(\fs)$ and $\zeta(\fs)$, we call $\dep(\fs)$ and $\wt(\fs)$ the depth and weight of them.

Todd and Thakur predicted an explicit basis and dimension recurrence for the
$K$-span of MZVs. The conjecture is now a theorem, independently due
to Im--Kim--Le--Ngo Dac--Pham and  Chang--Chen--Mishiba
\cite{CCM2023,IKLNDP2024}. 
Define $\cZ_{w}^{(K)}$ to be the $K$-linear subspace of $K_\infty$ spanned by MZVs $\zeta(\mathfrak{s})$ of weight $w$.

\begin{theorem}[Chang--Chen--Mishiba \cite{CCM2023}, independently Im--Kim--Le--Ngo Dac--Pham \cite{IKLNDP2024}]\label{thm_CCMIKLNP}
For any weight $w \ge 1$, we have $\dim_{K} \cZ_w^{(K)} = d_w^{(K)}$, where the sequence $\{d_w^{(K)}\}_{w \ge 1}$ is defined by the generating function 
\[
\sum_{w=1}^{\infty} d_w^{(K)} x^w = \frac{x(1-x^{q-1})}{1-2x+x^{q+1}}.
\]
Moreover, $\left\{ \zeta(\fs) ~\big|~ \fs \in \BTTw  \right\}$ is a $K$-basis of $\cZ_w^{(K)}$, where 
\[
\BTTw = \left\{ (\fh,z) \in \cI_w ~\Big|~ \fh \in [q]^{\bullet}, z \in [q-1] \right\}.
\]
\end{theorem}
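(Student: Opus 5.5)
The plan is to prove the two inequalities $\dim_K \cZ_w^{(K)} \le |\BTTw|$ and $\dim_K\cZ_w^{(K)}\ge |\BTTw|$ separately. The first comes from a spanning statement. The second comes from linear independence of $\{\zeta(\fs)\mid \fs\in\BTTw\}$, obtained by transcendence methods. The dimension formula then follows from a counting argument. Tuples $\fh\in[q]^\bullet$ are compositions with parts in $[q]$, with generating function $1/(1-(x+\cdots+x^q))$, and the last entry $z\in[q-1]$ contributes $x+\cdots+x^{q-1}$. Hence
\[
\sum_{w\ge1}|\BTTw|x^w=\frac{x(1-x^{q-1})/(1-x)}{1-x(1-x^q)/(1-x)}=\frac{x(1-x^{q-1})}{1-2x+x^{q+1}},
\]
which is the stated generating function for $d_w^{(K)}$.

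\textbf{Spanning.} I would start from the fundamental identities for power sums in positive characteristic. These are Thakur's relation expressing $\rS_d(q)$ through $\rS_d(1)$ and $\rS_{<d}(1)$, and more generally the relation $\rS_d(s)$ versus $\rS_d(s-q+1,q-1)$-type terms coming from the identity $\rS_d(q)=\rS_d(1)\ell$-shape formulas. I would propagate them to whole tuples via $\rS_d(\fs)=\rS_d(s_1)\rS_{<d}(\fs^-)$ and sum over $d$. This produces, for every tuple containing an entry $>q$, or ending with an entry $\ge q$, a $K$-linear relation. Each such relation rewrites $\zeta(\fs)$ as a combination of $\zeta(\ft)$ with $\wt(\ft)=\wt(\fs)$ that are strictly smaller for a suitable order: first lexicographic on entries exceeding $q$, then on the last entry. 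Induction on this order shows that every $\zeta(\fs)$ of weight $w$ lies in the $K$-span of $\zeta(\fs)$ with $\fs\in\BTTw$. The delicate point is that the rewriting must terminate. I would choose the order so that each binary-relation step strictly decreases a weight-preserving statistic, for instance the sum of the excesses $\max(s_i-q,0)$ together with a penalty for a final entry $\ge q$.

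\textbf{Independence.} I would pass to Carlitz multiple polylogarithms at algebraic points, since their periods come from explicit Anderson $t$-motives of Anderson–Brownawell–Papanikolas type. I would express each $\zeta(\fs)$ as a $K$-combination of such CMPL values via Chang's interpolation by Anderson–Thakur polynomials, so that the candidate basis corresponds triangularly to a family of CMPL values. Then I would apply the ABP criterion in Chang's refined form: a $K$-linear relation among these periods lifts to a relation with coefficients in $\bar K[t]$, equivalently to a rational solution of a system of Frobenius difference equations. I would then show by induction on depth and weight that any such solution must be trivial for the family indexed by $\BTTw$. This step is the main obstacle. The inductive analysis must show that the leading-depth coefficients satisfy a twisted equation of the form $\delta^{(-1)}=\delta(t-\theta)^{s}+\cdots$, which forces them to vanish exactly when the indices avoid the excluded patterns (last entry divisible by $q-1$ in the relevant sense). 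I would handle it by comparing degrees in $t$ and evaluating at $t=\theta^{q^N}$, peeling off one depth at a time, and using the spanning result to control the lower-order terms.

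Combining spanning with independence gives $\dim_K\cZ_w^{(K)}=|\BTTw|=d_w^{(K)}$ and shows that $\{\zeta(\fs)\mid\fs\in\BTTw\}$ is a $K$-basis of $\cZ_w^{(K)}$.
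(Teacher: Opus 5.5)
The paper does not prove this theorem. It quotes it from Chang--Chen--Mishiba and Im--Kim--Le--Ngo Dac--Pham and uses it as an input: the dimension comparison in the $\icBs$-part of \Cref{thm:ToddCMPLV}, and the value of $b_w$ in \Cref{sec:fixed}. Your generating-function count of $\BTTw$ is correct, and your two-sided architecture matches the cited works. However, both halves have genuine gaps.

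\textbf{Spanning.} The statistic you propose does not decrease. Take $q=3$ and $\fs=(3,4,3)$. The natural relation isolating it is $\overline{\cBs_{(3)}\circ\cC_{(1,3)}(R_1^{\rS})}$. Expanding it with \Cref{thm:product}, using $\Delta^2_{3,1}=1$ and $(2)\Ss(3)=(2,3)+(5)$, it contains $\zeta(3,2,5)$ with coefficient $1$ and $\zeta(4,1,5)$ with coefficient $D_1$. Their total excesses are $2$ and $3$, against $1$ for $\fs$, and both still end in an entry $\ge q$.

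This is structural, not accidental. The $Q$-part of $\cC_{(r,\fm)}(R_1^{\rS})$ is $D_1(1,(q-1)\Ss(r,\fm))$, which adds $q-1$ to entries of $\fm$. In addition, $\cBs_{\fn}$ can push the last entry of $\fn$ up to $q+1$.

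Even the (depth, lex)-order fails on the MZV side. The tuple $(3,2,5)$ has the same depth as $(3,4,3)$ but $(3,2,5)\prec(3,4,3)$; it comes from the $\Delta$-term of $(3)\Sc(1,3)$. This is exactly why the paper passes to CMPLVs. For the relations $\rho(\fs)$ in \eqref{eq:typeBC}--\eqref{eq:typeB}, built from $\Sis,\Sit,\Sic$ with no $\Delta$-terms, every tuple other than $\fs$ is strictly $\succ\fs$. Downward induction then shows that $\{\Li(\fs)\}_{\fs\in\BTTw}$ spans $\cZ_w^{(K)}$ (via \Cref{thm:SSiequ}), so $\dim_K\cZ_w^{(K)}\le d_w^{(K)}$. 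That repairs your upper bound. Note, though, that it gives spanning by $\Li$-values, not by $\zeta$-values, so the basis claim for the $\zeta(\fs)$ then rests entirely on independence.

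\textbf{Independence.} This is the substantive half of the theorem, and your proposal only names tools. Two steps are asserted without argument:
\begin{itemize}
\item that $\{\zeta(\fs)\}_{\fs\in\BTTw}$ is triangularly related to a CMPL family to which the ABP criterion applies;
\item that the resulting Frobenius difference system admits only the trivial solution over $\bar K[t]$.
\end{itemize}
The vanishing criterion you sketch (``last entry divisible by $q-1$'') does not match $\BTTw$. The set $\BTTw$ contains tuples ending in $q-1$, and its complement is cut out by an entry $>q$ or a last entry $\ge q$. You never indicate how this shape enters the degree comparison. Carrying out exactly this analysis is the transcendental core of the cited works. As written, the lower bound $\dim_K\cZ_w^{(K)}\ge d_w^{(K)}$ is not established, and your proposal is an outline of where a proof would go rather than a proof.
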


\Cref{thm_CCMIKLNP} implies that there are many non-trivial $K$-linear relations of MZVs. We can obtain some of them from the binary relations.

\begin{definition}
    \begin{enumerate}
        \item A binary relation $R$ of weight $w$ is given by a collection of elements $a_i,b_j$ of $K$ such that for all $d\in\bbZ$
        \[
        \sum_ia_i\rS_d(\fs_i)+\sum_jb_j\rS_{d+1}(\ft_j)=0,
        \]
        where the sum runs through tuples $\fs_i$ and $\ft_j$ of weight $w$. We denote the above equality by $R(d)$. And we may denote $R$ by $(P,Q)$ where $P=\sum_ia_i\fs_i$ and $Q=\sum_jb_j\ft_j$.
        \item A binary relation is called a fixed relation if $b_j=0$ for all $j$.
    \end{enumerate}
\end{definition}

We denote the set of all binary relations of weight $w$ by $\br_w$ and the set of all fixed relations of weight $w$ by $\Fix_w$.

Given a binary relation
\[
R: \quad \sum_ia_i\rS_d(\fs_i)+\sum_jb_j\rS_{d+1}(\ft_j)=0,
\]
summing over all $d$:
\[
\sum_d R(d): \quad \sum_ia_i\sum_d\rS_d(\fs_i)+\sum_jb_j\sum_d\rS_{d+1}(\ft_j)=0,
\]
by definition, we obtain a $K$-linear relation of MZVs, denoted by $\overline{R}$:
\[
\sum_ia_i\zeta(\fs_i)+\sum_jb_j\zeta(\ft_j)=0.
\]

The most important binary relation, named the fundamental binary relation $R_1^{\rS}$ (see \cite{Thakur2009}, §3.4.6), is given by
\begin{equation}
    \rS_d(q)+D_1\rS_{d+1}(1,q-1)=0 \quad\text{with }D_1=\theta^q-\theta\in K
\end{equation}

To obtain other binary relations from $R_1^S$, Todd introduced the $\cB,\cBs$ and $\cC$ operators; see \cite{Todd2018}.
\begin{definition}
    Given a tuple $W=(w_1,\cdots,w_r)\in\cI_k$.
    \begin{enumerate}
        \item The operator $\cB_W:\br_w\to\br_{w+k}$ is a linear map defined by
        \begin{equation}
            \cB_W(R): \quad\cB_W(R)(d)=\rS_d(W)\sum_{j<d}R(j) \quad \text{for any } R\in\br_w.
        \end{equation}
        \item The operator $\cBs_W:\br_w\to\br_{w+k}$ is a linear map defined by 
        \begin{equation}
        \cBs_W=\cB_{(w_1)}\circ\cdots\circ\cB_{(w_r)}   . 
        \end{equation}
        \item The operator $\cC_W:\br_w\to\br_{w+k}$ is a linear map defined by
        \begin{equation}
            \cC_W(R): \quad\cC_W(R)(d)=R(d)\rS_{<d+1}(W) \quad \text{for any } R\in\br_w.
        \end{equation}
    \end{enumerate}
    For $W=\emptyset$, we set all these operators to be the identity.
\end{definition}

To explicitly describe these operators, we recall the following product formulas due to Chang, Chen and Mishiba; see \cite{CCM2023}.
\begin{theorem}[Chang–Chen–Mishiba, \cite{CCM2023}]\label{thm:product}
    For tuples $\fs=(s_1,\fs^-),\ft=(t_1,\ft^-)$ (possibly $\fs^-=\emptyset$ or $\ft^-=\emptyset$), define the product maps $\Ss,\St$ and $\Sc$ as
    \begin{align}
        \fs\Ss\ft=&(s_1,\fs^-\Ss\ft)+(t_1,\fs\Ss\ft^-)+(s_1+t_1,\fs^-\Ss\ft^-)+\\&\sum_{0<i<s_1+t_1}\Delta_{s_1,t_1}^i(s_1+t_1-i,(i)\Ss(\fs^-\Ss\ft^-));\notag\\
        \fs\St\ft=&(s_1,\fs^-\Ss\ft);\\
        \fs\Sc\ft=&\fs\Ss\ft-(s_1,\fs^-\Ss\ft)-(t_1,\fs\Ss\ft^-).
    \end{align}
    where $\fs\Ss\emptyset=\emptyset\Ss\fs=\fs$, $\fs\Sc\emptyset=\emptyset\Sc\fs=\fs$.
    
    And $\Delta_{a,b}^i=\begin{cases}
        (-1)^{(a-1)}\dbinom{i-1}{a-1}+(-1)^{(b-1)}\dbinom{i-1}{b-1}\quad&\text{if }q-1|i,0<i<a+b;\\0&\text{otherwise.}
    \end{cases}$

    By bilinear extension, $\Ss,\St$ and $\Sc$ become maps from $\cS\times\cS$ to $\cS$. And it's easy to see that $\Ss$ and $\Sc$ are both commutative and associative. For multiple $\St$-products, we always evaluate from right to left. That is, $\fa\St\fb\St\fc$ means $\fa\St(\fb\St\fc)$.

    Then we have
    \begin{align}
        \rS_{<d}(\fs)\rS_{<d}(\ft)&=\rS_{<d}(\fs\Ss\ft);\\
        \rS_d(\fs)\rS_d(\ft)&=\rS_d(\fs\Sc\ft);\\
        \rS_d(\fs)\rS_{<d}(\ft)&=\rS_{d}(\fs\St\ft).
    \end{align}
\end{theorem}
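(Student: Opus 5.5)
The three identities are proved by strong induction on $\dep(\fs)+\dep(\ft)$. The base input is Chen's depth-one formula for products of power sums,
\[
\rS_d(a)\rS_d(b)=\rS_d(a+b)+\sum_{0<i<a+b}\Delta^i_{a,b}\,\rS_d(a+b-i,i),
\]
which is the source of the coefficients $\Delta^i_{a,b}$. We take it as the known starting point. It comes from partial fractions of $1/(a_1^{a}a_2^{b})$ over pairs of monic polynomials of the same degree $d$, together with the vanishing of power sums $\sum_{\deg a=d}a^{-k}$ structure at $q-1\mid i$. The order of the argument is: the third identity, then the first, then the second.

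\textbf{Step 1: the third identity.} It holds directly from the definitions. Write $\fs=(s_1,\fs^-)$. Then $\rS_d(\fs)=\rS_d(s_1)\rS_{<d}(\fs^-)$. Applying the first identity to $\fs^-$ and $\ft$ (smaller total depth) gives
\[
\rS_d(\fs)\rS_{<d}(\ft)=\rS_d(s_1)\rS_{<d}(\fs^-\Ss\ft)=\rS_d\bigl((s_1,\fs^-\Ss\ft)\bigr)=\rS_d(\fs\St\ft).
\]
The middle equality uses linearity of $\rS_d$ in the tuple argument.

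\textbf{Step 2: the first identity.} Split each sum over $k<d$ by degree:
\[
\rS_{<d}(\fs)\rS_{<d}(\ft)=\sum_{k<d}\Bigl(\rS_k(\fs)\rS_{<k}(\ft)+\rS_{<k}(\fs)\rS_k(\ft)+\rS_k(\fs)\rS_k(\ft)\Bigr).
\]
The first two summands are handled by Step 1 at level $k$. The third is handled by the second identity at level $k$. This is valid in an induction on total depth as long as the second identity at a given total depth uses only strictly smaller depths. Summing over $k<d$ then gives $\rS_{<d}$ of $\fs\St\ft+\ft\St\fs+\fs\Sc\ft=\fs\Ss\ft$. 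This also confirms that the recursive definition of $\Ss$ is consistent with $\Sc$.

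\textbf{Step 3: the second identity, the main obstacle.} Write
\[
\rS_d(\fs)\rS_d(\ft)=\rS_d(s_1)\rS_d(t_1)\,\rS_{<d}(\fs^-)\rS_{<d}(\ft^-).
\]
Apply Chen's formula to the first two factors and the first identity, at smaller depth, to the second two. This produces $\rS_d(s_1+t_1)\rS_{<d}(\fs^-\Ss\ft^-)$ together with the terms $\Delta^i_{s_1,t_1}\rS_d(s_1+t_1-i,i)\rS_{<d}(\fs^-\Ss\ft^-)$. In the latter,
\[
\rS_d(s_1+t_1-i,i)=\rS_d(s_1+t_1-i)\rS_{<d}(i).
\]
A second application of the first identity, to $(i)$ and $\fs^-\Ss\ft^-$, gives $\rS_d\bigl(s_1+t_1-i,(i)\Ss(\fs^-\Ss\ft^-)\bigr)$.

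The delicate point is the depth bookkeeping. The tuple $(i)$ joined with $\fs^-\Ss\ft^-$ can have total depth up to $\dep\fs+\dep\ft-1$, which is smaller than $\dep\fs+\dep\ft$. So I would order the induction by total depth, and within fixed total depth prove the identities in the order $\Sc$ (second identity) before $\Ss$ (first identity) at that level. The expression obtained is exactly $\fs\Sc\ft$ as defined.

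Finally, commutativity and associativity of $\Ss$ and $\Sc$ follow by the same induction on the formal side. Alternatively, they follow from injectivity of the evaluation maps $\fs\mapsto(\rS_{<d}(\fs))_d$, which is less clear, so the formal induction is preferable.
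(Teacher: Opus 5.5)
The paper gives no proof of this statement; it is quoted from \cite{CCM2023}, so there is no internal argument to compare with. On its own terms, your proof of the three displayed identities is correct.

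Take Chen's formula as the case $\dep\fs=\dep\ft=1$ of the second identity. For a pair of total depth $n$, both the $\St$-identity and the $\Sc$-identity only invoke the $\Ss$-identity on pairs of total depth at most $n-1$. This uses the fact that every tuple occurring in $\fu\Ss\fv$ has depth at most $\dep\fu+\dep\fv$, which follows from the recursion by the same induction. The $\Ss$-identity at depth $n$ then follows from your splitting over $k<d$ together with $\fs\St\ft+\ft\St\fs+\fs\Sc\ft=\fs\Ss\ft$.

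So within a fixed total depth the order is $\St$ and $\Sc$ first, then $\Ss$, as you say in Step~3. Your opening sentence announcing ``third, then first, then second'' contradicts this and should be corrected.

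Your heuristic for Chen's formula is also off. The condition $(q-1)\mid i$ does not come from power sums over degree-$d$ polynomials. It comes from writing $u-v=cm$ with $c\in\Fq^\times$ and $m$ monic of degree $<d$, and using that $\sum_{c\in\Fq^\times}c^{-i}$ is $-1$ or $0$ according as $(q-1)\mid i$ or not. The sum over $m$ is also what produces the factor $\rS_{<d}(i)$.

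The step that does not hold up is your final paragraph.

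\textbf{Commutativity} of $\Ss$ and $\Sc$ does follow by induction from $\Delta^i_{a,b}=\Delta^i_{b,a}$.

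\textbf{Associativity} does not come from ``the same induction''. Already for three depth-one tuples, $((a)\Sc(b))\Sc(c)=(a)\Sc((b)\Sc(c))$ unfolds into depth-two terms with coefficients $\Delta^j_{a+b,c}$ and $\Delta^i_{a,b}$, plus terms $\Delta^i_{a,b}\Delta^j_{a+b-i,c}\,(a+b+c-i-j,(j)\Ss(i))$. The required symmetry in $a,b,c$ is a genuine identity among binomial coefficients modulo $p$. No induction on depth reduces it to anything simpler.

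\textbf{Your alternative route} fails as stated. On $\cS^K_w$ the kernel of $P\mapsto(\rS_d(P))_d$ is $\Fix_w$, which by \Cref{thm:main-binary} is nonzero for every $w>q$. Injectivity on $\Fq$-linear combinations would be a separate, unproved input.

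None of this affects the three identities, which never use associativity. They do not even need formal commutativity, since $\rS_{<k}(\ft^-\Ss\fs)=\rS_{<k}(\fs\Ss\ft^-)$ already follows from the induction hypothesis. The associativity claim, however, needs its own proof or a citation.
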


By \Cref{thm:product}, for $W\neq\emptyset$, we may rewrite $\cB,\cBs$ and $\cC$ as 
\begin{align}
    \cB_W(P, Q)&=(W\St P + W\Sc Q+W\St Q, 0);\label{eq:defBS}\\
    \cBs_W(P, Q)&=((w_1)\St(w_2)\St\cdots\St((w_r)\St P + (w_r)\Sc Q+(w_r)\St Q), 0);\label{eq:defBsS}\\
    \cC_W(P, Q)&=(P\Sc W+P\St W, Q\St W)\label{eq:defCS}.
\end{align}

\subsection{Main results in our paper}
As an analogue of the conjecture in the classical case, Todd formulated the following relation conjecture \cite[Conjecture 5.1]{Todd2018}, which is also the first main result in our paper.

\begin{theorem}[Todd's relation conjecture]\label{thm:Todd}
For $n\in\bbZ_{+}$, the $K$-linear relation space among Thakur MZVs of weight $q+n$ is
spanned by
\begin{equation}\label{eq:Todd-B-family-intro}
 \{\overline{\cB_U(R_1^{\rS})}: U\in\cI_n\}\cup\{\overline{\cC_V(R_1^{\rS})}: V\in\cI_n\}\cup
 \{\overline{\cB_U\circ\cC_V(R_1^{\rS})}:U\in\cI_l, V\in\cI_m, l+m=n\}.
\end{equation}
It is also spanned when every $\cB$ in \eqref{eq:Todd-B-family-intro} is replaced
by $\cBs$.
\end{theorem}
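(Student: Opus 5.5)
The strategy is a dimension count. By \Cref{thm_CCMIKLNP}, the space of $K$-linear relations among MZVs of weight $w=q+n$ has dimension $|\cI_w|-d_w^{(K)}=2^{w-1}-d_w^{(K)}$, because the formal MZVs $\zeta(\fs)$, $\fs\in\cI_w$, span $\cZ_w^{(K)}$. It is more convenient to replace relations by the kernel of the evaluation map $\cS_w^K\to\cZ_w^{(K)}$. Write $\mathcal R_w$ for the $K$-span of the relations $\overline{\cB_U(R_1^{\rS})}$, $\overline{\cC_V(R_1^{\rS})}$ and $\overline{\cB_U\circ\cC_V(R_1^{\rS})}$, and let $\mathcal R_w^\ast$ be the analogous span with $\cB$ replaced by $\cBs$. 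It then suffices to prove that the quotient $\cS_w^K/\mathcal R_w$ has dimension at most $d_w^{(K)}$. Equivalently, every tuple of weight $w$ should be congruent modulo $\mathcal R_w$ to a $K$-combination of tuples in $\BTTw$. The same reduction is needed for $\mathcal R_w^\ast$.

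\textbf{Step 1: leading terms of the generators.} Using \eqref{eq:defBS}, \eqref{eq:defBsS}, \eqref{eq:defCS} and \Cref{thm:product}, I will compute each generator as an element of $\cS_w^K$, starting from $R_1^{\rS}=((q),D_1(1,q-1))$, so that $\overline{R_1^{\rS}}=(q)+D_1(1,q-1)$. For $\cC_V$, the product $(q)\St V$ contributes the term $(q,V)$, and $(1,q-1)\St V$ contributes $(1,(q-1)\Ss V)$. For $\cB_U$, the term $U\St(q)$ gives $(u_1,u_2\ast\cdots)$-type tuples, with the $(q)$ entry shuffled deep inside. I will choose an ordering on tuples, for instance by depth and then by a lexicographic order that treats the entry $q$ specially. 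The goal is to show that each generator has a distinguished leading tuple with an invertible coefficient, and that these leading tuples run exactly through the complement $\cI_w\setminus\BTTw$. Roughly, that complement consists of the tuples that contain an entry greater than $q$, or that end with an entry $\ge q$.

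\textbf{Step 2: triangularity and counting.} The generators are indexed by pairs $(U,V)$ of total weight $n$, so there are more of them than needed. I will select a subfamily, for example $\cB_U\circ\cC_V$ with restricted shapes, and show that it is upper triangular relative to the chosen order with leading tuples as above. This gives $\dim\mathcal R_w\ge 2^{w-1}-d_w^{(K)}$. Together with the inclusion $\mathcal R_w\subseteq$ (relation space), this forces equality.

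The main obstacle is Step 1. The $\Delta^i_{s_1,t_1}$ terms in $\Ss$ generate many lower-order tuples, and controlling the leading terms for all $U,V$ simultaneously is delicate. A cleaner route is the one the abstract points to. First, I would prove the analogous statement for Carlitz multiple polylogarithm values. In that setting the operators correspond to $\Si$-versions, the products are simpler, and a basis and relation description like the one of Im--Kim--Ngo Dac is available. Second, I would transport the result back to MZVs by the transfer theorems, which give a triangular change of basis between $\rS$-tuples and $\Si$-tuples that intertwines the operators. For the $\cBs$-version, I would use the known result of Chang--Chen--Mishiba if the paper permits citing it; otherwise I would repeat the argument, noting that $\cBs_U$ and $\cB_U$ have the same leading term.
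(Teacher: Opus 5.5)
\textbf{There is a genuine gap: the $\cB$-part.}

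Much of your architecture matches the paper: a dimension count against \Cref{thm_CCMIKLNP}, the CMPLV version first, then a pull-back along $\tsi$. The map $\tsi$ sends $R_1^{\Si}$ to $R_1^{\rS}$ and intertwines $\icB,\icC$ with $\cB,\cC$. Citing Chang--Chen--Mishiba for the $\cBs$-half is also what the paper does. You cannot replace that citation by transfer, because $\tsi$ does not intertwine $\icBs$ with $\cBs$.

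The gap is the $\cB$-half, which is the actual new content, and you give no argument for it. Steps 1--2 never identify a triangular subfamily of $\{\cB_U\circ\cC_V\}$. The fallback claim that $\cBs_U$ and $\cB_U$ have the same leading term is false. Since $U\Sit P=(u_1,U^-\Sis P)$, the operator $\icB_U$ shuffles and stuffs the entry $q$ of $R_1^{\Si}$ into $U^-$, producing tuples of smaller depth. Already for $q=2$ and $U=(1,1)$ one has
\[
\icBs_{(1,1)}(R_1^{\Si})=\bigl((1,1,2)+D_1(1,2,1)+D_1(1,1,1,1),\,0\bigr),
\]
with (depth, lex)-leading term $(1,1,2)$. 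In contrast,
\[
\icB_{(1,1)}(R_1^{\Si})=\bigl((1,1,2)+(1,2,1)+(1,3)+D_1\bigl((2,2)+(1,1,2)+(1,2,1)+(1,1,1,1)\bigr),\,0\bigr),
\]
whose leading term is $(1,3)$. More generally, for $\fn=(n_1,\dots,n_{k-1})$ with $\dep(\fn)\ge 2$, the minimal term of $\icB_{\fn}(R_1^{\Si})$ is $(n_1,\dots,n_{k-2},n_{k-1}+q)$. That tuple is the leading tuple of a different (Type~2) Im--Kim--Ngo Dac relation. So swapping $\icBs$ for $\icB$ in a triangular family destroys triangularity, and you would need a genuinely new family, which you do not supply.

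\textbf{The missing idea.} The paper avoids any leading-term analysis of $\icB$. Instead it shows that the $\icB$-family spans at least what the $\icBs$-family spans.

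\begin{itemize}
\item It proves the operator identity $\icBs_{(s_1)}\circ\icB_{\fs^-}=\icB_{\fs}-\icB_{(s_1)}\circ\icC_{\fs^-}$ for $\fs^-\neq\emptyset$. This is a short computation from $\fs^-\Sis P=\fs^-\Sit P+P\Sit\fs^-+\fs^-\Sic P$.
\item By induction on depth, using $\icC_U\circ\icC_V=\icC_{U\Sis V}$, it deduces that every $\icBs_W$ is an $\Fq$-combination of operators $\icB_U\circ\icC_V$ with $U\neq\emptyset$. In your example, $\icB_{(1)}\circ\icC_{(1)}(R_1^{\Si})=\bigl((1,3)+(1,2,1)+D_1(2,2)+D_1(1,1,2),\,0\bigr)$, which cancels exactly the extra terms.
\item The $\icBs$-part of \Cref{thm:ToddCMPLV} comes from the relations $\rho(\fs)$. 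According to the type of $\fs$, these are $\icBs_{\fn}\circ\icC_{(r,\fm)}$, $\icC_{(r,\fm)}$ or $\icBs_{\fn}$ applied to $R_1^{\Si}$. Their (depth, lex)-leading terms run exactly through $\cI_w\setminus\BTTw$. Combining this with the previous point gives the $\icB$-part.
\item Transfer then gives the $\cB$-part for MZVs.
\end{itemize}

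So the logical direction is $\icBs\Rightarrow\icB\Rightarrow\cB$. This is the reverse of your plan to prove $\cB$ first and obtain $\cBs$ from matching leading terms.
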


Chang, Chen, and Mishiba have proved the $\cBs$-part of \Cref{thm:Todd}. In this paper, we will prove the Carlitz-multiple-polylogarithm-value version of \Cref{thm:Todd} (see \Cref{thm:ToddCMPLV}) and a transfer theorem (see \Cref{thm:transfer}), which will yield the $\cB$-part of \Cref{thm:Todd}.

The method we use to prove the $\cB$-part of \Cref{thm:ToddCMPLV} can be applied to prove the $\cB$-part of \Cref{thm:Todd} directly. For readers concerned solely with \Cref{thm:Todd}, we refer them directly to \Cref{sec:icB}.

The second part of the paper determines all binary relations and fixed relations. Let $\Fix_w$ be the space of all fixed relations of power sums of weight $w$, and let $\br_w$ be space of all binary relations of weight $w$. The following theorem summarizes
the result.

\begin{theorem}\label{thm:main-binary}
\begin{align*}
 \sum_{w\geq1}(\dim_{K}\Fix_w)x^w
 &=\frac{x^{q+1}(1-x)}{(1-2x)(1-2x+x^{q+1})},\\
 \sum_{w\geq1}(\dim_{K}\br_w)x^w
 &=\frac{x^q(1-x)}{(1-2x)(1-2x+x^{q+1})}.
\end{align*}

Moreover, there is a non-canonical isomorphism
\[\br_w\cong\cR_w^{\zeta}\oplus\Fix_w\]
where $\cR_w^{\zeta}$ is the $K$-vector space of all $K$-linear relations of MZVs of weight $w$.
\end{theorem}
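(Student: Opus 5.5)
The plan is to derive both generating functions from one exact sequence, together with an independent computation of $\dim_K\Fix_w$. Let $\cR_w^{\zeta}\subset\cS_w^K$ be the kernel of the evaluation map $\cS_w^K\to\cZ_w^{(K)}$, $\fs\mapsto\zeta(\fs)$. By \Cref{thm_CCMIKLNP},
\[
\sum_w(\dim\cR_w^{\zeta})x^w=\frac{x}{1-2x}-\frac{x(1-x^{q-1})}{1-2x+x^{q+1}}=\frac{x^{q}(1-x)^2}{(1-2x)(1-2x+x^{q+1})}.
\]
Define $\pi:\br_w\to\cR_w^{\zeta}$ by $(P,Q)\mapsto P+Q$, which is exactly $R\mapsto\overline R$ read as a formal sum. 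I would then establish three facts.
\begin{enumerate}
\item $\pi$ is surjective. This follows from \Cref{thm:Todd}, or already from its $\cBs$-part due to Chang--Chen--Mishiba, since every spanning relation there has the form $\overline{R}$ with $R\in\br_w$. For $w<q$ both sides are $0$.
\item $\ker\pi\cong\Fix_w$, via $P\mapsto(P,-P)$.
\item $\dim\Fix_w$ is given by the stated series.
\end{enumerate}
Together these give the non-canonical splitting $\br_w\cong\cR_w^{\zeta}\oplus\Fix_w$. Adding the two generating functions,
\[
\frac{x^{q+1}(1-x)+x^q(1-x)^2}{(1-2x)(1-2x+x^{q+1})}=\frac{x^q(1-x)}{(1-2x)(1-2x+x^{q+1})},
\]
which is the second formula.

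For (ii), suppose $(P,Q)\in\br_w$ with $Q=-P=-\sum_i a_i\fs_i$. Then $f(d):=\sum_i a_i\rS_d(\fs_i)$ satisfies $f(d)=f(d+1)$ for all $d$. Since $\rS_d(\fs)=0$ for $d$ very negative, $f\equiv 0$, so $P$ is a fixed relation. Conversely, any fixed relation $P$ gives $(P,-P)\in\br_w$, because both $P$ evaluated at $d$ and $P$ evaluated at $d+1$ vanish. (Equivalently, one can use $\deg_\theta\rS_d(\fs)\to-\infty$ as $d\to+\infty$.)

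For (iii), $\Fix_w$ is the kernel of $\cS_w^K\to K^{\bbZ}$, $\fs\mapsto(\rS_d(\fs))_d$. The claimed series is equivalent to
\[
\sum_w\dim\bigl(\Span_K\{(\rS_d(\fs))_d:\fs\in\cI_w\}\bigr)x^w=\frac{x}{1-2x}-\frac{x^{q+1}(1-x)}{(1-2x)(1-2x+x^{q+1})}=\frac{x(1-x^q)}{1-2x+x^{q+1}},
\]
and the right-hand side is precisely the generating function of $[q]^{\bullet}\setminus\{\emptyset\}$ counted by weight. I would therefore prove that the sequences $\{(\rS_d(\fs))_d:\fs\in[q]^{\bullet},\ \fs\neq\emptyset,\ \wt\fs=w\}$ form a $K$-basis of this span. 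This is the power-sum analogue of \Cref{thm_CCMIKLNP}, in which the last entry is allowed to be $q$.

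Spanning would go by induction on weight and depth, using an algorithm in the style of Im--Kim--Ngo Dac. Any entry $s_i>q$ is rewritten via $\cB_U$ and $\cC_V$ applied to the fundamental relation $R_1^{\rS}$, together with the product formulas of \Cref{thm:product}; at fixed $d$ these give genuine identities of power sums, not merely identities of zeta values. The crucial point is that a fixed-$d$ identity can no longer shift a $q$ in the last slot into $(1,q-1)$ at level $d+1$, which is why last entries equal to $q$ survive. Linear independence is the main obstacle. I would try the Im--Kim--Ngo Dac approach of working over $\Fq$ first and then base-changing, or alternatively a triangularity argument: track the leading $\theta$-adic term of $\rS_d(\fs)$ as $d$ varies, and show that the matrix of leading coefficients over the candidate basis is triangular with respect to a suitable order on $[q]^{\bullet}$. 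If independence proves hard directly, the fallback is to bound $\dim\Fix_w$ from below by explicit fixed relations, of the form $\cB_U$ or $\cC_V$ applied to kernel elements of the pair $(P,-P)$ type, and to combine this with the upper bound coming from spanning.
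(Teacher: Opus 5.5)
Your steps (i) and (ii), and the generating-function bookkeeping, are correct and agree with the paper: surjectivity of $(P,Q)\mapsto P+Q$ comes from the Todd/Chang--Chen--Mishiba spanning result, and the kernel is $\{(P,-P):P\in\Fix_w\}$ by the shift argument. The gap is step (iii), because the basis you propose is false. Already in weight $q+1$, apply $\cB_{(1)}$ to $R_1^{\rS}$ and use $(1)\Sc(1,q-1)=(2,q-1)$. This gives the fixed relation
\[
\rS_d(1,q)+D_1\rS_d(2,q-1)+D_1\rS_d(1,1,q-1)=0\qquad\text{for all }d\in\bbZ,
\]
which involves only tuples in $[q]^{\bullet}$. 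So the sequences $(\rS_d(\fs))_d$ with $\fs\in[q]^{\bullet}$ are linearly dependent. There are $2^q-1$ of them, which is exactly the dimension you are aiming for, so they cannot span either; indeed $(\rS_d(q+1))_d$ lies outside their span.

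Your heuristic is backwards. The fixed relations $\cBs_{\fn}(R_1^{\rS})$ eliminate every $(\fn,q)$ with $\fn\neq\emptyset$ at a single level $d$, because the level-$(d+1)$ term is absorbed into $\sum_{j<d}R(j)$. A tuple with first entry $>q$ (Types 0 and 3), by contrast, can only be removed using $\cC_V(R_1^{\rS})=\bigl((q)\Sc V+(q)\St V,\ D_1(1,q-1)\St V\bigr)$. Contrary to what you assert, this is not a fixed-level identity. So the spanning half of (iii) fails as well.

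Your fallback does not close the argument either. Explicit fixed relations give a lower bound on $\dim\Fix_w$. A spanning set for the sequence space gives an upper bound on that space's dimension, which is again a lower bound on $\dim\Fix_w$. What is missing is an \emph{upper} bound on $\dim\Fix_w$, i.e.\ a proof that every fixed relation comes from known ones.

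The paper supplies this for $\Si_d$, transported back by $\tsi$:
\begin{itemize}
\item The relations $\rho(\ft)$, $\ft\in\cI_w\setminus\BTTw$, are triangular in the (depth, lex)-order. Hence the $P_{\ft}+Q_{\ft}$ form a basis of all CMPLV relations.
\item Expand a fixed relation uniquely as $\sum_j b_j(P_{\ft_j}+Q_{\ft_j})$. Your own argument from (ii) then shows that $\sum_j b_jQ_{\ft_j}$ is itself fixed.
\item Only Types 0 and 3 have $Q\neq0$, namely $D_1(1,(q-1)\Sis(r,\fm))$. Its value at level $d+1$ factors as $D_1\Si_{d+1}(1)\Si_{<d+1}(q-1)\Si_{<d+1}(r,\fm)$.
\item Write the Type 0/3 part of $\sum_j b_j\ft_j$ as $\sum_k c_k(q+r_k,\fm_k)$. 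The factorization forces $\sum_k c_k(r_k,\fm_k)\in\iFix_{w-q}$.
\item This gives $\iFix_w=B_w\oplus C(\iFix_{w-q})$, hence $f_w=2^{w-1}-d_w^{(K)}-2^{w-q-1}+f_{w-q}$, which yields the series.
\end{itemize}
The resulting dimension count matches yours. However, the basis elements beyond $\BTTw$ come from tuples $(q+r,\fm)$ with first entry $>q$ (e.g.\ $(q+1)$ in weight $q+1$), not from tuples $(\fn,q)$.
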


\noindent\textbf{Statement on AI use.} All the main ideas were developed by the author.
The paper was written by the author.
ChatGPT (GPT-5.6 Sol and GPT-6 Astra, OpenAI) was used in the following ways:
\begin{itemize}
\item to assist the author in formulating \Cref{lem:BsB};
\item to compute the generating functions in \Cref{thm:main-binary};
\item to verify and simplify other proofs made by the author;
\end{itemize}
All mathematical statements, proofs, computations, and verifications contributed by ChatGPT (GPT-5.6 Sol and GPT-6 Astra, OpenAI) mentioned above were subsequently examined, rigorously checked, and rewritten by the author, who takes full responsibility for the results.

\begin{acknowledgements}
    The author wishes to thank Li Lai for his valuable suggestions and helpful comments throughout this work. The author is also deeply grateful to ITZY for the encouragement and support provided throughout the research.
\end{acknowledgements}

The paper is organized as follows. \Cref{sec:transfer} transfers our discussion of MZVs to that of Carlitz multiple polylogarithm values. \Cref{sec:proof} proves the
\Cref{thm:ToddCMPLV}. \Cref{sec:fixed} determines the fixed relations, and \Cref{sec:binary} determines the binary relations.

\section{From MZVs to CMPLVs}\label{sec:transfer}

\subsection{Carlitz multiple polylogarithm values}
We recall the notion of Carlitz multiple polylogarithm values. Set $l_0=1$ and $l_d=\prod_{k=1}^d(\theta-\theta^{q^k})$ for $d\in\bbZ_+$, for a tuple $\fs=(s_1,\cdots,s_r)\in\cI_w$, we introduce analogues of power sums
\[
\Si_d(\fs)=\sum_{d=k_1>\cdots>k_r\geq0}\frac{1}{l_{k_1}^{s_1}\cdots{l_{k_r}^{s_r}}}\in K_{\infty},
\]
Similarly, we set $\Si_{-1}(\emptyset)=1$, $\Si_{d}(\emptyset)=0$ for $d\in\bbZ\setminus\{-1\}$, and $\Si_d(\fs)=0$ for $d<\dep(\fs)-1$.

Further define
\[
\Si_{<d}(\fs)=\sum_{k<d}\Si_k(\fs)\in K_{\infty}.
\]
Similarly, we have
\[
\Si_d(\fs)=\Si_d(s_1)\Si_{<d}(\fs^-).
\]

The Carlitz multiple polylogarithm values (abbreviated as CMPLVs) are defined by
\[
\Li(\fs)=\sum_{d\geq0}\Si_d(\fs)\in K_{\infty}.
\]
For $\Si_d(\fs), \Si_{<d}(\fs)$ and $\Li(\fs)$, we still call $\dep(\fs)$ and $\wt(\fs)$ the depth and weight of them.

We can define binary relations and similar operators for $\Si$:

\begin{definition}
    \begin{enumerate}
        \item A binary relation $R$ of $\Si$ of weight $w$ is given by a collection of elements $a_i,b_j$ of $K$ such that for all $d\in\bbZ$
        \[
        \sum_ia_i\Si_d(\fs_i)+\sum_jb_j\Si_{d+1}(\ft_j)=0,
        \]
        where the sum runs through tuples $\fs_i$ and $\ft_j$ of weight $w$. We denote the above equality by $R(d)$.
        \item A binary relation is called a fixed relation if $b_j=0$ for all $j$.
    \end{enumerate}
\end{definition}

We denote the set of all binary relations of $\Si$ of weight $w$ by $\ibr_w$ and the set of all fixed relations of weight $w$ by $\iFix_w$

Similarly, we have the fundamental relation called $R_1^{\Si}$:
\begin{equation}
    \Si_d(q)+D_1\Si_{d+1}(1,q-1)=0.
\end{equation}

\begin{definition}
    Given a tuple $W=(w_1,\cdots,w_r)\in\cI_k$.
    \begin{enumerate}
        \item The operator $\icB_W:\ibr_w\to\ibr_{w+k}$ is a linear map defined by
        \begin{equation}
            \icB_W(R): \quad\icB_W(R)(d)=\Si_d(W)\sum_{j<d}R(j) \quad \text{for any } R\in\ibr_w.
        \end{equation}
        \item The operator $\icBs_W:\ibr_w\to\ibr_{w+k}$ is a linear map defined by 
        \begin{equation}
        \icBs_W=\icB_{(w_1)}\circ\cdots\circ\icB_{(w_r)}   . 
        \end{equation}
        \item The operator $\icC_W:\ibr_w\to\ibr_{w+k}$ is a linear map defined by
        \begin{equation}
            \icC_W(R): \quad\icC_W(R)(d)=R(d)\Si_{<d+1}(W) \quad \text{for any } R\in\ibr_w.
        \end{equation}
    \end{enumerate}
    For $W=\emptyset$, we set all these operators to be the identity.
\end{definition}

Similarly, we may define the product maps $\Sis,\Sit$ and $\Sic$ as
    \begin{align}
        \fs\Sis\ft=&(s_1,\fs^-\Sis\ft)+(t_1,\fs\Sis\ft^-)+(s_1+t_1,\fs^-\Sis\ft^-)\\
        \fs\Sit\ft=&(s_1,\fs^-\Sis\ft);\\
        \fs\Sic\ft=&(s_1+t_1,\fs^-\Sis\ft^-).
    \end{align}
    where $\fs\Sis\emptyset=\emptyset\Sis\fs=\fs$, $\fs\Sic\emptyset=\emptyset\Sic\fs=\fs$. And $\Sis$ and $\Sic$ are both commutative and associative. For multiple $\Sit$-products, we always evaluate from right to left.

    Then we have
    \begin{align}
        \Si_{<d}(\fs)\Si_{<d}(\ft)&=\Si_{<d}(\fs\Sis\ft);\\
        \Si_d(\fs)\Si_d(\ft)&=\Si_d(\fs\Sic\ft);\\
        \Si_d(\fs)\Si_{<d}(\ft)&=\Si_{d}(\fs\Sit\ft),
    \end{align}
which can be verified directly by definition.

And the $\icB_w,\icBs,\icC$ can be rewritten as
\begin{align}
    \icB_W(P, Q)&=(W\Sit P + W\Sic Q+W\Sit Q, 0);\label{eq:defBSi}\\
    \icBs_W(P, Q)&=((w_1)\Sit(w_2)\Sit\cdots\Sit((w_r)\Sit P + (w_r)\Sic Q+(w_r)\Sit Q), 0);\label{eq:defBsSi}\\
    \icC_W(P, Q)&=(P\Sic W+P\Sit W, Q\Sit W)\label{eq:defCSi}.
\end{align}

\subsection{Preliminary lemmas}
CMPLVs are more tractable objects than MZVs. Meanwhile, prior work (see, e.g., \cite{Chen2015} \cite{IKN2026} \cite{Thakur2009}) enables us to transfer our discussion of MZVs to that of CMPLVs. We consider $\rS_d$ and $\Si_d$ as two linear maps from $\cS_w^K$ to $K_{\infty}$ by $K$-linear extension. We recall the following results.

\begin{theorem}[Im–Kim–Ngo Dac, \cite{IKN2026}, Theorem~3.1]\label{thm:SSiequ}
    Let $w\in\bbZ_+$. Then the $K$-vector spaces spanned by MZVs and CMPLVs of weight $w$ coincide. To be more precise, let $\fs\in\cI_w$, there exist $a_i,b_j\in\Fq$ and tuples $\fs_i,\ft_j\in\cI_w$ such that for all $d\in\bbZ$
    \begin{equation}
        \rS_d(\fs)=\sum_ia_i\Si_d(\fs_i),
    \end{equation}
    \begin{equation}
        \Si_d(\fs)=\sum_jb_j\rS_d(\ft_j).
    \end{equation}
\end{theorem}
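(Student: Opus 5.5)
The plan is to prove both expansions by induction, first on depth one and then for general depth. The engine is the three product formulas: \Cref{thm:product} for $\rS$, and the analogous identities for $\Si$ stated after the definition of $\Sis,\Sit,\Sic$. All of these preserve weight. Their structure constants are integers, or the $\Delta^i_{a,b}$, and so lie in the prime field $\mathbb F_p\subset\Fq$. The only analytic input I need is the classical Carlitz identity $\rS_d(1)=1/l_d=\Si_d(1)$ for all $d\ge0$; both sides vanish for $d<0$.

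\emph{Reduction to depth one.} Suppose that for every depth-one tuple $(s)$ of weight $\le w$ we have $\rS_d(s)=\sum_i a_i\Si_d(\fs_i)$ for all $d$, with $a_i\in\Fq$ independent of $d$ and $\wt(\fs_i)=s$. Take a general $\fs=(s_1,\fs^-)$ and assume by induction on depth that $\rS_k(\fs^-)=\sum_j c_j\Si_k(\fu_j)$ for all $k$. Since the coefficients do not depend on $k$, summing over $k<d$ gives $\rS_{<d}(\fs^-)=\sum_j c_j\Si_{<d}(\fu_j)$. Then
\[
\rS_d(\fs)=\rS_d(s_1)\rS_{<d}(\fs^-)=\sum_{i,j}a_ic_j\Si_d(\fs_i)\Si_{<d}(\fu_j)=\sum_{i,j}a_ic_j\Si_d(\fs_i\Sit\fu_j).
\]
This is an $\Fq$-combination of $\Si_d$ of tuples of weight $\wt(\fs)$, valid uniformly in $d$. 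The same argument, with the roles of $\rS$ and $\Si$ exchanged (using $\St$), reduces the converse to depth one.

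\emph{Depth one, $\Si\to\rS$.} This direction is easy. By definition $\Si_d(s)=l_d^{-s}=\Si_d(1)^s$. Using Carlitz and $\rS_d(\fs)\rS_d(\ft)=\rS_d(\fs\Sc\ft)$ repeatedly, we get $\Si_d(s)=\rS_d(1)^s=\rS_d\bigl((1)\Sc(1)\Sc\cdots\Sc(1)\bigr)$ with $s$ factors. The product $(1)\Sc\cdots\Sc(1)$ is an $\mathbb F_p$-combination of tuples of weight $s$, which gives the required expansion.

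\emph{Depth one, $\rS\to\Si$.} This is the main obstacle. I would argue by strong induction on $s$, with $s=1$ being Carlitz. For $s\ge2$, the definition of $\Sc$ in depth one gives
\[
(1)\Sc(s-1)=(s)+\sum_{0<i<s}\Delta^i_{1,s-1}(s-i,i),
\]
because $\fs^-=\ft^-=\emptyset$ collapses the inner $\Ss$-products to $(i)$. Hence
\[
\rS_d(s)=\rS_d(1)\rS_d(s-1)-\sum_{0<i<s}\Delta^i_{1,s-1}\rS_d(s-i)\rS_{<d}(i).
\]
Every depth-one quantity on the right has argument $<s$, so the induction hypothesis applies to $\rS_d(1)$, $\rS_d(s-1)$, $\rS_d(s-i)$ and, after summing over $k<d$, to $\rS_{<d}(i)$. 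Converting the resulting products back with $\Si_d(\cdot)\Si_d(\cdot)=\Si_d(\cdot\Sic\cdot)$ and $\Si_d(\cdot)\Si_{<d}(\cdot)=\Si_d(\cdot\Sit\cdot)$ gives an $\Fq$-combination of $\Si_d$ of weight-$s$ tuples, uniform in $d$.

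The delicate points are bookkeeping rather than ideas. First, one must check that the conventions $\rS_d(\emptyset)$, $\Si_{-1}(\emptyset)=1$ and vanishing for small $d$ make every identity hold for \emph{all} $d\in\bbZ$, including $d<0$ and $d<\dep-1$. Second, one must check that the inductions on $s$ and on depth are well founded when interleaved. Running them as a single induction on weight, and within a weight on depth, handles the second point.
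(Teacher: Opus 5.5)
Your proposal is correct and is essentially the paper's argument. The paper cites Im--Kim--Ngo Dac for the statement, but its recursive construction of $\ts$ and $\tsi$ is exactly your induction: base case $\rS_d(1)=\Si_d(1)$, then $\ts(\fs)=\ts(s_1)\Sit\ts(\fs^-)$ and $\tsi(\fs)=\tsi(s_1)\St\tsi(\fs^-)$ in depth $\ge 2$, and $\ts(w)=\ts(w-1)\Sic\ts(1)-\sum_{0<i<w}\Delta^i_{w-1,1}\ts(w-i,i)$, $\tsi(w)=\tsi(w-1)\Sc\tsi(1)$ in depth one, by induction on weight; the only cosmetic difference is that you expand $\rS_d(s-i)\rS_{<d}(i)$ directly rather than through $\ts(s-i,i)$.
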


\Cref{thm:SSiequ} ensures that we may define weight‑preserving $K$-linear maps $\ts,\tsi:\cS^K\to\cS^K$ such that $\rS_d=\Si_d\circ \ts$ and $\Si_d=\rS_d\circ \tsi$ for all $d$. But there are many choices of $\ts$ and $\tsi$, we fix
the following two canonical ones; see \cite[Proposition~3.2, Proposition~3.3]{IKN2026}. 

First, since \[\rS_d(\emptyset)=\Si_d(\emptyset),\quad\text{and}\quad \rS_d(1)=\Si_d(1);\]
we set\[\ts(\emptyset)=\tsi(\emptyset)=\emptyset,\quad\text{and}\quad \ts(1)=\tsi(1)=1.\]

Next we define $\ts$ and $\tsi$ by induction on the weight, assume that the maps have been defined for all tuples of weight less than $w$.

For $\fs=(s_1,\fs^-)$ with $\wt(\fs)=w$ and $\fs^-\neq\emptyset$, since $\ts(s_1)$ and $\ts(\fs^-)$ have been defined, we have
\[
\rS_d(\fs)=\rS_d(s_1)\rS_{<d}(\fs^-)=\Si_d(\ts(s_1))\Si_{<d}(\ts(\fs^-))=\Si_d(\ts(s_1)\Sit \ts(\fs^-)).
\]
So we may define
\begin{equation}
    \ts(\fs)=\ts(s_1,\fs^-)=\ts(s_1)\Sit \ts(\fs^-).
\end{equation}

For $\fs=(w)$, since
\begin{align*}
    \rS_d(w)&=\rS_d(w-1)\rS_d(1)-\sum_{0<i<w}\Delta_{w-1,1}^i\rS_d(w-i,i)\\&=\Si_d(\ts(w-1))\Si_d(\ts(1))-\sum_{0<i<w}\Delta_{w-1,1}^i\Si_d(\ts(w-i,i)),\
\end{align*}
we may define
\begin{equation}
    \ts(w)=\ts(w-1)\Sic \ts(1)-\sum_{0<i<w}\Delta_{w-1,1}^i\ts(w-i,i).
\end{equation}

Similarly, for $\fs=(s_1,\fs^-)$ with $\fs^-\neq\emptyset$, we define
\begin{equation}
    \tsi(\fs)=\tsi(s_1,\fs^-)=\tsi(s_1)\St \tsi(\fs^-),
\end{equation}
and
\begin{equation}
    \tsi(w)=\tsi(w-1)\Sc \tsi(1),
\end{equation}
which satisfy our desired property.

As a corollary of \Cref{thm:SSiequ}, we have 

\begin{theorem}
    Let $w\in\bbZ_+$ and $d\in\bbZ$. Then the $\Fq$-vector spaces (hence also the $K$-vector space) spanned by $\rS_d$'s and $\Si_d$'s of weight $w$ coincide.
\end{theorem}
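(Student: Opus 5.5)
Fix $w\in\bbZ_+$ and $d\in\bbZ$. Let $V^{\rS}_{w,d}$ be the $\Fq$-span of $\{\rS_d(\fs):\fs\in\cI_w\}$ in $K_\infty$, and $V^{\Si}_{w,d}$ the $\Fq$-span of $\{\Si_d(\fs):\fs\in\cI_w\}$. The plan is to derive both inclusions directly from \Cref{thm:SSiequ}.

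The key point is that \Cref{thm:SSiequ} gives more than an equality of spans of the infinite sums $\zeta$ and $\Li$. For each fixed tuple, it produces coefficients in $\Fq$ and tuples of the same weight $w$ for which the identity holds simultaneously for every $d\in\bbZ$.

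\emph{Inclusion $V^{\rS}_{w,d}\subseteq V^{\Si}_{w,d}$.} Take $\fs\in\cI_w$. By \Cref{thm:SSiequ} there are $a_i\in\Fq$ and $\fs_i\in\cI_w$ with $\rS_d(\fs)=\sum_i a_i\Si_d(\fs_i)$. Specializing to our fixed $d$ shows $\rS_d(\fs)\in V^{\Si}_{w,d}$. Since $\fs$ was arbitrary, the inclusion follows.

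\emph{Inclusion $V^{\Si}_{w,d}\subseteq V^{\rS}_{w,d}$.} This is symmetric, using the second identity $\Si_d(\fs)=\sum_j b_j\rS_d(\ft_j)$ with $b_j\in\Fq$ and $\ft_j\in\cI_w$.

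Together these give $V^{\rS}_{w,d}=V^{\Si}_{w,d}$. Tensoring with $K$, that is, taking $K$-spans inside $K_\infty$, preserves the equality, so the $K$-spans coincide as well.

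Essentially nothing here is an obstacle. The only thing to check is that the coefficients in \Cref{thm:SSiequ} lie in $\Fq$ and not merely in $K$, and that they do not depend on $d$; both are part of that theorem's statement.

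If one prefers an argument internal to the paper, the canonical maps $\ts$ and $\tsi$ can be used instead, since $\rS_d=\Si_d\circ\ts$ and $\Si_d=\rS_d\circ\tsi$. Their recursive definitions use only the products $\Sit,\Sic,\St,\Sc$ and the coefficients $\Delta^i_{a,b}$. These products have integer coefficients, so their images in $\Fq$ show that $\ts$ and $\tsi$ map tuples to $\Fq$-combinations of tuples. An induction on the weight, following the recursion defining $\ts$ and $\tsi$, would make this precise.
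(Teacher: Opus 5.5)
Your proof is correct and matches the paper, which records this statement as an immediate corollary of \Cref{thm:SSiequ}. The coefficients there lie in $\Fq$ and do not depend on $d$, so specializing to a fixed $d$ gives both inclusions, exactly as you argue. Your alternative route through the integer-coefficient recursions defining $\ts$ and $\tsi$ is also sound, and it is essentially how those canonical maps are constructed in the paper.
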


\subsection{MZV-CMPLV transfer}
Now we can transfer our discussion of MZVs to CMPLVs. 

We first combine all $\rS_d$, obtaining a $K$-linear map called $F_w^S:\cS_w^K\to K_{\infty}^{\bbZ}$ by
\begin{equation}
    F_w^S \left( \sum a_i\fs_i \right) = \left( \sum a_i\rS_d(\fs_i) \right) _{d\in\bbZ}\in K_{\infty}^{\bbZ},
\end{equation}
similarly, we can define the map $F_w^{Si}$. 

We further define $\varphi:
K_{\infty}^{\bbZ}\oplus K_{\infty}^{\bbZ}\to K_{\infty}^{\bbZ}$ by $\varphi((A_d)_{d\in\bbZ}\oplus(B_d)_{d\in\bbZ})=(A_d+B_{d+1})_{d\in\bbZ}$, then we have the following canonical isomorphisms by definition:
\begin{align}
\br_w &\cong \ker(\varphi\circ(F_w^S\oplus F_w^S)),\quad \Fix_w\cong\ker F_w^S; \label{eq:auto1}\\
\ibr_w &\cong \ker(\varphi\circ(F_w^{Si}\oplus F_w^{Si})),\quad \iFix_w\cong\ker F_w^{Si}. \label{eq:auto2}
\end{align}

Our starting point for transferring the problem is the following theorem.

\begin{theorem}\label{thm:transfer1}
    $\ts$ is a $K$-linear automorphism of $\cS^K$ such that $F_w^S=F_w^{Si}\circ \ts$ with the inverse $\tsi$.
\end{theorem}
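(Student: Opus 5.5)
Three things need proof: $F_w^S=F_w^{Si}\circ\ts$ on each weight piece, $\tsi\circ\ts=\mathrm{id}$, and $\ts\circ\tsi=\mathrm{id}$. Both maps are weight-preserving and $\cS^K=\bigoplus_w\cS_w^K$ with each $\cS_w^K$ finite-dimensional. So it is enough to prove the two compositions are the identity on every $\cS_w^K$; then $\ts$ is an automorphism with inverse $\tsi$.

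\textbf{Step 1: the intertwining property.} By construction $\rS_d(\fs)=\Si_d(\ts(\fs))$ for all $d$. I will prove this by induction on weight, following the recursive definition.
\begin{itemize}
\item The cases $\emptyset$ and $(1)$ are immediate.
\item For $\fs=(s_1,\fs^-)$ with $\fs^-\neq\emptyset$, combine $\rS_d(\fs)=\rS_d(s_1)\rS_{<d}(\fs^-)$, the induction hypothesis (summed over $d$ to handle $\rS_{<d}$), and $\Si_d(\fa)\Si_{<d}(\fb)=\Si_d(\fa\Sit\fb)$.
\item For $\fs=(w)$, use the $\Sc$ product formula of \Cref{thm:product} applied to $(w-1)$ and $(1)$. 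This expresses $\rS_d(w)$ through $\rS_d(w-1)\rS_d(1)$ and the depth-two terms $\rS_d(w-i,i)$, which have weight $w$ but are handled by the previous case. Then apply $\Si_d(\fa)\Si_d(\fb)=\Si_d(\fa\Sic\fb)$.
\end{itemize}
The same induction gives $\Si_d=\rS_d\circ\tsi$. Hence $F_w^S=F_w^{Si}\circ\ts$ and $F_w^{Si}=F_w^S\circ\tsi$.

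\textbf{Step 2: a triangularity argument, not direct inversion.} Checking $\tsi\circ\ts=\mathrm{id}$ directly from the recursions looks messy, since the $\Delta$ coefficients interact with $\Sit$, $\Sic$, $\St$, $\Sc$. Instead I will show both maps are unitriangular. Order tuples of weight $w$ by depth, and within a depth by a suitable order. I claim
\[
\ts(\fs)\in\fs+\Span\{\ft:\dep(\ft)>\dep(\fs)\}\quad\text{or, for $(w)$,}\quad \ts(w)\in(w)+\text{higher-depth terms}.
\]
The base of the claim is that $(w-1)\Sic(1)=(w)$. For $\tsi$, I will check that $\fa\St\fb$ has leading term the concatenation $(\fa,\fb)$, and that $\fa\Sc\fb$ has leading term $(a_1+b_1,\fa^-\Ss\fb^-)$ plus terms of depth at least as large. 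This needs care: the $\Ss$ product contains the depth-lowering term $(s_1+t_1,\dots)$, so I will track a lexicographic leading term rather than rely on depth alone.

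\textbf{Step 3: conclude.} Given unitriangularity, $\ts$ is invertible on each finite-dimensional $\cS_w^K$. For the inverse, note that $\ts\circ\tsi$ satisfies $\Si_d\circ\ts\circ\tsi=\Si_d$ for all $d$. I expect this step to be the main obstacle. If $\ker F_w^{Si}\neq0$, that identity alone does not force $\ts\circ\tsi=\mathrm{id}$. In that case the plan is to prove, by the same weight-and-depth induction, that the recursions are mutually inverse on generators. Concretely, I will show $\tsi\circ\ts(w)=(w)$ using $\tsi(\fa\Sit\fb)=\tsi(\fa)\St\tsi(\fb)$ and $\tsi(\fa\Sic\fb)=\tsi(\fa)\Sc\tsi(\fb)$, which are compatibility identities to be verified inductively. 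An alternative, if it can be cited, is \cite[Propositions~3.2--3.3]{IKN2026}, where these canonical maps are shown to be mutually inverse.
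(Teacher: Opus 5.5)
\textbf{Step 2 should be dropped.} It is redundant, and its claim is false as stated. Because $\cS_w^K$ is finite-dimensional, a one-sided inverse is automatically two-sided; that is all the paper uses. The claim fails because $\ts(\fs)$ need not be $\fs$ plus strictly higher-depth terms. For $q=3$ the recursion gives $\ts(4)=(4)-(2,2)$, hence
\[
\ts(4,1)=\ts(4)\Sit(1)=(4,1)-(2,2,1)-(2,1,2)-(2,3),
\]
and $(2,3)$ has the same depth as $(4,1)$. Such terms come from the merging term of $\Sis$ inside $\ts(s_1)\Sit\ts(\fs^-)$. They always have a smaller first entry, so a reverse-lexicographic tie-break within each depth restores triangularity, but you do not need it.

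\textbf{Step 3's fallback is the argument that works, and it is not optional.} $\ker F_w^{Si}=\iFix_w$ is nonzero for every $w>q$ (\Cref{sec:fixed}). Your route is the mirror image of the paper's:
\begin{itemize}
\item The paper proves that $\ts$ carries $\Ss,\St,\Sc$ to $\Sis,\Sit,\Sic$ (\Cref{lem:ts}, via the splitting identity of \Cref{lem:tst}). It then checks $\ts\circ\tsi=\mathrm{id}$ on $(s_1,\fs^-)$ and on $(w)$.
\item You would prove that $\tsi$ carries $\Sis,\Sit,\Sic$ to $\Ss,\St,\Sc$. You then check $\tsi\ts(s_1,\fs^-)=(s_1)\St\fs^-=\fs$ and $\tsi\ts(w)=(w-1)\Sc(1)-\sum_i\Delta^i_{w-1,1}(w-i,i)=(w)$.
\end{itemize}

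The two compatibilities you list are not enough by themselves. $\Sit$-compatibility needs $\Sis$-compatibility, since $\fa\Sit\fb=(a_1,\fa^-\Sis\fb)$. $\Sis$-compatibility in turn needs the $\rS$-side analogue of \Cref{lem:tst}: for $\fa,\fc\neq\emptyset$,
\[
(\fa\St\fb)\Ss(\fc\St\fd)=(\fa\Sc\fc)\St(\fb\Ss\fd)+\fa\St\bigl(\fb\Ss(\fc\St\fd)\bigr)+\fc\St\bigl(\fd\Ss(\fa\St\fb)\bigr).
\]
This follows from the defining recursion of $\Ss$, bilinearity and associativity, exactly as in the proof of \Cref{lem:tst}, because $\fs\Sc\ft=\fs\Ss\ft-\fs\St\ft-\ft\St\fs$. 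Two consequences follow from it:
\begin{itemize}
\item $(\fx\St\fy)\St\fz=\fx\St(\fy\Ss\fz)$, which gives $\Sit$-compatibility.
\item $(\fx\St\fy)\Sc(\fu\St\fv)=(\fx\Sc\fu)\St(\fy\Ss\fv)$, which gives $\Sic$-compatibility in depth $\geq 2$.
\end{itemize}

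Your direction has one real advantage. Since $\tsi(n)=(1)\Sc(1)\Sc\cdots\Sc(1)$ ($n$ factors), the depth-one case $\tsi(a+b)=\tsi(a)\Sc\tsi(b)$ is just associativity of $\Sc$. This avoids the $\Delta$-bookkeeping the paper needs in its $(a)\Ss(b)$ case.
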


We need the following lemma:
\begin{lemma}\label{lem:tst}
    For $\fa,\fb,\fc,\fd\in\cI$ with $\fa,\fc\neq\emptyset$, we have
    \begin{equation}
        (\fa\Sit\fb)\Sis(\fc\Sit\fd)=(a\Sic\fc)\Sit(\fb\Sis\fd)+\fa\Sit(\fb\Sis(\fc\Sit\fd))+\fc\Sit(\fd\Sis(\fa\Sit\fb))
    \end{equation}
\end{lemma}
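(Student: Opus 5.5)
We prove the identity purely combinatorially, from the recursive definitions of $\Sis$, $\Sit$ and $\Sic$; no analytic input is needed. (The ``$a$'' in the first term on the right should read $\fa$.)

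\textbf{Reduction.} Write $\fa=(a_1,\fa^-)$ and $\fc=(c_1,\fc^-)$. Unwinding the definitions,
\[
\fa\Sit\fb=(a_1,\fa^-\Sis\fb),\qquad \fc\Sit\fd=(c_1,\fc^-\Sis\fd),
\]
and both tuples are nonempty. Set $\fu=\fa^-\Sis\fb$ and $\fm=\fc^-\Sis\fd$. The defining recursion of $\Sis$ then gives
\[
(\fa\Sit\fb)\Sis(\fc\Sit\fd)=(a_1,\fu\Sis(c_1,\fm))+(c_1,(a_1,\fu)\Sis\fm)+(a_1+c_1,\fu\Sis\fm).
\]

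\textbf{Matching the right-hand side.} We treat the three terms in turn.

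The first term is $(\fa\Sic\fc)\Sit(\fb\Sis\fd)$. Since $\fa\Sic\fc=(a_1+c_1,\fa^-\Sis\fc^-)$, it equals
\[
(a_1+c_1,(\fa^-\Sis\fc^-)\Sis\fb\Sis\fd).
\]
Commutativity and associativity of $\Sis$ rewrite the tail as $\fu\Sis\fm$, so this term is the third summand above.

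The second term is $\fa\Sit(\fb\Sis(\fc\Sit\fd))$, which equals $(a_1,\fa^-\Sis\fb\Sis(\fc\Sit\fd))$. Its tail is
\[
\fu\Sis(c_1,\fm)=\fu\Sis(\fc\Sit\fd),
\]
so this term is the first summand.

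The third term is handled symmetrically. It equals $(c_1,\fc^-\Sis\fd\Sis(\fa\Sit\fb))$, and its tail is $\fm\Sis(a_1,\fu)$. This is the second summand.

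\textbf{What needs checking.} The only real work is the bookkeeping. We must check that $\Sis$ is commutative and associative on $\cS$, which the paper asserts and which follows by a routine induction on total depth. We must also handle the degenerate cases where some of $\fa^-$, $\fb$, $\fc^-$, $\fd$ is empty, using the conventions $\emptyset\Sis\fs=\fs$ and $\fs\Sic\emptyset=\fs$. Because $\fa$ and $\fc$ are nonempty, every appeal to the $\Sis$-recursion above involves two nonempty tuples, so no separate case split is required.

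The main obstacle, though mild, is stating associativity of $\Sis$ cleanly enough to justify the rearrangements. We would do this by induction on $\dep(\fx)+\dep(\fy)+\dep(\fz)$, expanding $(\fx\Sis\fy)\Sis\fz$ via the recursion and comparing it with $\fx\Sis(\fy\Sis\fz)$ according to the first entry. This produces seven types of terms on each side, which match pairwise.
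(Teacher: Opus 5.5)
Your proof is correct and is the same as the paper's: both expand $(a_1,\fa^-\Sis\fb)\Sis(c_1,\fc^-\Sis\fd)$ by the defining recursion of $\Sis$, then match the three resulting summands with the three terms on the right using commutativity and associativity of $\Sis$ (and you rightly read the stray ``$a$'' as $\fa$). Your remarks on bilinear extension and on why no degenerate-case split is needed are accurate, and they spell out slightly more than the paper does.
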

\begin{proof}
    \begin{align}
        (\fa\Sit\fb)\Sis(\fc\Sit\fd)=&(a_1,\fa^-\Sis\fb)\Sis(c_1,\fc^-\Sis\fd)\\=&(a_1+c_1,\fa^-\Sis\fc^-\Sis\fb\Sis\fd)+(a_1,(c_1,\fc^-\Sis\fd)\Sis(\fa^-\Sis\fb))\notag\\&+(c_1,(a_1,\fa^-\Sis\fb)\Sis(\fc^-\Sis\fd)),
    \end{align}
    \begin{equation}
        (a\Sic\fc)\Sit(\fb\Sis\fd)=(a_1+c_1,\fa^-\Sis\fc^-)\Sit(\fb\Sis\fd),
    \end{equation}
    \begin{equation}
        \fa\Sit(\fb\Sis(\fc\Sit\fd))=\fa\Sit(\fb\Sis(c_1,\fc^-\Sis\fd))=(a_1,\fa^-\Sis\fb\Sis(c_1,\fc^-\Sis\fd)),
    \end{equation}
    and similarly,
    \begin{equation}
        \fc\Sit(\fd\Sis(\fa\Sit\fb))=(c_1,(a_1,\fa^-\Sis\fb)\Sis(\fc^-\Sis\fd)).
    \end{equation}
    
    Combining these, we complete the proof.
\end{proof}

\begin{lemma}\label{lem:ts} For $\fa\in\cI_{>0},\fb\in\cI$, we have
    \begin{enumerate}
        \item \label{lem:i}$\ts(\fa\Ss\fb)=\ts(\fa)\Sis\ts(\fb),$
        \item $\ts(\fa\St\fb)=\ts(\fa)\Sit\ts(\fb),$
        \item \label{lem:iii}$\ts(\fa\Sc\fb)=\ts(\fa)\Sic\ts(\fb).$
    \end{enumerate}
\end{lemma}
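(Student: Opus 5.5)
The plan is to prove \ref{lem:i}--\ref{lem:iii} simultaneously by induction on $N=\wt(\fa)+\wt(\fb)$. Inside a fixed $N$, the depth-one case of \ref{lem:iii} is handled by a secondary induction on the second entry. Throughout, $\ts$ is extended $K$-linearly and all products bilinearly. The cases with $\fb=\emptyset$ are trivial because $\ts(\emptyset)=\emptyset$.

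First I record some purely formal identities in $\cS$. They are checked directly from the recursive definitions, in the same way as \Cref{lem:tst}.
\begin{enumerate}
\item[(a)] $x\Sit(y\Sis z)=(x\Sit y)\Sit z$, and the same with $\St,\Ss$ in place of $\Sit,\Sis$.
\item[(b)] $(x\Sit y)\Sic(u\Sit v)=(x\Sic u)\Sit(y\Sis v)$.
\item[(c)] For $\fa=(a_1,\fa^-)$ and $\fb=(b_1,\fb^-)$ we have $\fa\Sc\fb=((a_1)\Sc(b_1))\St(\fa^-\Ss\fb^-)$. This holds because $(a_1)\Sc(b_1)=(a_1+b_1)+\sum_i\Delta^i_{a_1,b_1}(a_1+b_1-i,i)$, and taking $\St$ with $\fc$ reproduces the defining formula for $\fa\Sc\fb$.
\item[(d)] $\fa\Ss\fb=\fa\St\fb+\fb\St\fa+\fa\Sc\fb$, and likewise for the $\Si$-products.
\end{enumerate}
Note also that the defining rule $\ts(s_1,\fs^-)=\ts(s_1)\Sit\ts(\fs^-)$ remains valid when $\fs^-=\emptyset$.

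\emph{Proof of (ii).} Write $\fa\St\fb=(a_1,\fa^-\Ss\fb)$. Then
\[
\ts(\fa\St\fb)=\ts(a_1)\Sit\ts(\fa^-\Ss\fb)=\ts(a_1)\Sit\bigl(\ts(\fa^-)\Sis\ts(\fb)\bigr),
\]
where the last step uses \ref{lem:i} in weight $<N$. By (a) this equals $(\ts(a_1)\Sit\ts(\fa^-))\Sit\ts(\fb)=\ts(\fa)\Sit\ts(\fb)$.

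\emph{Proof of \ref{lem:iii}.} Apply (c), then (ii) (just proved, in weight $N$), then \ref{lem:i} in weight $<N$, and finally (b). This reduces \ref{lem:iii} to the depth-one statement
\[
\ts((a)\Sc(b))=\ts(a)\Sic\ts(b),\qquad a+b=N.
\]

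\emph{Proof of \ref{lem:i}.} Once (ii) and \ref{lem:iii} are known in weight $N$, \ref{lem:i} follows from (d).

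The main obstacle is the depth-one case of \ref{lem:iii}. I would do induction on $b$.

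For $b=1$, we have $(a)\Sc(1)=(a+1)+\sum_i\Delta^i_{a,1}(a+1-i,i)$. By the defining recursion for $\ts(a+1)$, this gives exactly $\ts((a)\Sc(1))=\ts(a)\Sic\ts(1)$. Combined with the reduction above, \ref{lem:iii} then holds in weight $N$ whenever the second argument is $(1)$, for any first argument.

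For $b>1$, use $(b)=(b-1)\Sc(1)-\sum_i\Delta^i_{b-1,1}(b-i,i)$ together with associativity of $\Sc$:
\[
(a)\Sc(b)=\bigl((a)\Sc(b-1)\bigr)\Sc(1)-\sum_i\Delta^i_{b-1,1}\,(a)\Sc(b-i,i).
\]
Now apply $\ts$ to each term.
\begin{itemize}
\item For the first term, use the case "second argument $(1)$" (the first argument is a combination of tuples of weight $N-1$), and then the induction hypothesis in weight $N-1$ for $(a)\Sc(b-1)$.
\item For the second term, use the reduction (c) applied to $(a)\Sc(b-i,i)$, which brings it to $(a)\Sc(b-i)$ with $b-i<b$, plus lower-weight cases.
\end{itemize}
Finally, regroup using associativity of $\Sic$ and bilinearity:
\[
\ts(a)\Sic\Bigl[\ts(b-1)\Sic\ts(1)-\sum_i\Delta^i_{b-1,1}\ts(b-i,i)\Bigr]=\ts(a)\Sic\ts(b),
\]
by the definition of $\ts(b)$.

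The part that needs the most care is the bookkeeping of this nested induction, so that every invocation is genuinely at smaller weight or smaller $b$.
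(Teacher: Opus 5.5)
Your proof is correct, but it is organized differently from the paper's.

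The paper proves (i) first and does the real work there:
\begin{itemize}
\item When $\fa^-\neq\emptyset$, it expands $\ts(\fa)\Sis\ts(\fb)$ using \Cref{lem:tst} (the expansion of a $\Sis$-product of two $\Sit$-products) and invokes (iii) in lower weight for $(a_1)\Sc(b_1)$.
\item In the depth-one case it substitutes $(a)=(a-1)\Ss(1)-(a-1,1)-(1,a-1)-\sum_i\Delta^i_{a-1,1}(a-i,i)$. Commutativity and associativity of $\Ss$ then reduce everything to $\ts((w-1)\Ss(1))=\ts(w-1)\Sis\ts(1)$, which is the defining recursion.
\item (ii) and (iii) are only asserted to follow ``similarly''.
\end{itemize}

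You shift the weight of the argument. (ii) comes for free from (i) in lower weight, the main work goes into (iii), and (i) is recovered from the decomposition (d). The factorization (c), together with your identity (b), reduces (iii) to depth one; (b) is simpler than \Cref{lem:tst}. Because the defining recursion for $\ts(w)$ is written with $\Sic$, your case $b=1$ is literally that recursion. The price is that you need associativity of $\Sc$ rather than of $\Ss$; both are asserted in \Cref{thm:product}, so this is legitimate. What your route buys is that the steps the paper waves through as ``similarly'' are actually written out.

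One simplification: the inner induction on $b$ is unnecessary. The products $(a)\Sc(b-1)$ and $(a)\Sc(b-i)$ have total weight $N-1$ and $N-i$. So the outer hypothesis, together with the ``second argument $(1)$'' case at weight $N$, already covers every invocation. The bookkeeping you flag as delicate is therefore straightforward.
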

\begin{proof}
	We prove it by induction on $\wt(\fa)+\wt(\fb)$. When $\wt(\fa)+\wt(\fb)=0,1$, it is obvious. Assume \ref{lem:i}--\ref{lem:iii} hold for $\wt(\fa)+\wt(\fb)<w$, now we first prove $\ts(\fa\Ss\fb)=\ts(\fa)\Sis\ts(\fb)$ for $\wt(\fa)+\wt(\fb)=w$. Set $\fa=(a_1,\fa^-)$ and $\fb=(b_1,\fb^-)$, possibly $\fa^-=\emptyset$, $\fb^-=\emptyset$ or $\fb=\emptyset$.
	
	First, if $\fa=(a_1,\fa^-)$ with $\fa^-\neq\emptyset$, then
	\begin{align*}
		\ts(\fa\Ss\fb)=&\ts((a_1,\fa^-\Ss\fb)+(b_1,\fa\Ss\fb^-)+(a_1+b_1,\fa^-\Ss\fb^-)\notag\\&+\sum_i\Delta_{a_1,b_1}^i(a_1+b_1-i,(i)\Ss\fa^-\Ss\fb^-))\\
		=&\ts(a_1)\Sit(\ts(\fa^-)\Sis\ts(\fb))+\ts(b_1)\Sit(\ts(\fa)\Sis\ts(\fb^-))\notag\\&+\ts(a_1+b_1)\Sit(\ts(\fa^-)\Sis\ts(\fb^-))+\notag\\&\sum_i\Delta_{a_1,b_1}^i\ts(a_1+b_1-i)\Sit(\ts(i)\Sis\ts(\fa^-)\Sis\ts(\fb^-))\\
		:=&\ts(a_1+b_1)\Sit(\ts(\fa^-)\Sis\ts(\fb^-))\notag\\&+\sum_i\Delta_{a_1,b_1}^i\ts(a_1+b_1-i)\Sit(\ts(i)\Sis\ts(\fa^-)\Sis\ts(\fb^-))+R.
	\end{align*}
    
	On the other hand, by \Cref{lem:tst}
	\begin{align*}
		\ts(\fa)\Sis\ts(\fb)=&(\ts(a_1)\Sic\ts(b_1))\Sit(\ts(\fa^-)\Sis\ts(\fb^-))+R\\=&\ts((a_1)\Sc(b_1))\Sit\ts(\fa^-\Ss\fb^-)+R
        \\=&(\ts(a_1+b_1)+\sum_i\Delta_{a_1,b_1}^i\ts(a_1+b_1-i)\Sit\ts(i))\Sit\ts(\fa^-\Ss\fb^-)+R
        \\=&\ts(a_1+b_1)\Sit(\ts(\fa^-)\Sis\ts(\fb^-))\notag\\&+\sum_i\Delta_{a_1,b_1}^i\ts(a_1+b_1-i)\Sit(\ts(i)\Sis\ts(\fa^-)\Sis\ts(\fb^-))+R.
	\end{align*}

    So $\ts(\fa\Ss\fb)=\ts(\fa)\Sis\ts(\fb)$.

    It remains to consider the case $\fa=(a)$, since $\Ss$ and $\Sis$ are both commutative, we only deal with $\fb=(b)$ and assume $a>1$ (the case $a=b=1$ is obvious).

    Since
    \begin{align*}
        \ts((a)\Ss(b))=&\ts(((a-1)\Ss(1)-(a-1,1)-(1,a-1)-\sum_i\Delta_{a-1,1}^i(a-i,i))\Ss(b))
        \\=&\ts((a-1)\Ss(b)\Ss(1))-\ts(a-1,1)\Sis\ts(b)-\ts(1,a-1)\Sis\ts(b)\notag\\&-\sum_i\Delta_{a-1,1}^i\ts(a-i,i)\Sis\ts(b),
    \end{align*}
    and
    \begin{align*}
        \ts(a)\Sis\ts(b)=&\ts((a-1)\Ss(1)-(a-1,1)-(1,a-1)-\sum_i\Delta_{a-1,1}^i(a-i,i))\Sis\ts(b)\\=&\ts((a-1)\Ss(b))\Sis\ts(1)-\ts(a-1,1)\Sis\ts(b)-\ts(1,a-1)\Sis\ts(b)\notag\\&-\sum_i\Delta_{a-1,1}^i\ts(a-i,i)\Sis\ts(b).
    \end{align*}

    It suffices to show $\ts((a-1)\Ss(b)\Ss(1))=\ts((a-1)\Ss(b))\Sis\ts(1)$.
    \begin{align*}
        \ts((a-1)\Ss(b)\Ss(1))=&\ts(((a-1,b)+(b,a-1)+(w-1)+\sum_i\Delta_{a-1,b}^i(w-1-i,i))\Ss(1))\\=&(\ts(a-1,b)+\ts(b,a-1)\\&+\sum_i\Delta_{a-1,b}^i\ts(w-1-i,i))\Sis\ts(1)+\ts((w-1)\Ss(1)),
    \end{align*}
    \begin{align*}
        \ts((a-1)\Ss(b))\Sis\ts(1)=&\ts((a-1,b)+(b,a-1)+(w-1)\\&+\sum_i\Delta_{a-1,b}^i(w-1-i,i))\Sis\ts(1)
    \end{align*}

    So the final step is to show $\ts((w-1)\Ss(1))=\ts(w-1)\Sis\ts(1)$, which follows directly from the defining recursion.

    Similarly, we can prove $\ts(\fa\St\fb)=\ts(\fa)\Sit\ts(\fb)$ and $\ts(\fa\Sc\fb)=\ts(\fa)\Sic\ts(\fb)$ for $\wt(\fa)+\wt(\fb)=w$.
\end{proof}

\begin{proof}[Proof of Theorem \ref{thm:transfer1}]
It suffices to show $\ts$ is a $K$-linear automorphism restricted to $\cS_w^K$ with inverse $\tsi$ for all $w\in\bbZ_+$. Since each $\cS_w^K$ is finite-dimensional, we only need to show $\ts\circ\tsi=\operatorname{id}$.

We still prove it by induction on the weight. When $\wt=1$, it's obvious. Assume we have done for $\wt<w$,  now we prove it for $\wt=w$

For $\fs=(s_1,\fs^-)\in\cI_w$ with $\fs^-\neq\emptyset$,
\begin{equation}
    \ts\tsi(\fs)=\ts(\tsi(s_1)\St\tsi(\fs^-))=\ts\tsi(s_1)\Sit\ts\tsi(\fs^-)=(s_1)\Sit\fs^-=\fs.
\end{equation}

For $\fs=(w)$,
\begin{equation}
    \ts\tsi(w)=\ts(\tsi(w-1)\Sc\tsi(1))=(w-1)\Sic(1)=(w).
\end{equation}
\end{proof}

\begin{corollary}\label{cor:eqrelation}
    $\tsi$ gives the isomorphism from $\iFix_w$ to $\Fix_w$, with $\tsi\oplus \tsi$ giving the isomorphism from $\ibr_w$ to $\br_w$
\end{corollary}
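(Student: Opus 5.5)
The plan is to read off the corollary from \Cref{thm:transfer1} together with the kernel descriptions \eqref{eq:auto1} and \eqref{eq:auto2}. Note first that $\ts$ and $\tsi$ preserve weight, since each defining step, and each of $\Sit,\Sic,\St,\Sc$, is weight-homogeneous. So \Cref{thm:transfer1} restricts, for every $w$, to mutually inverse $K$-linear automorphisms of the finite-dimensional space $\cS_w^K$ satisfying $F_w^S=F_w^{Si}\circ\ts$. Composing on the right with $\tsi$ gives $F_w^{Si}=F_w^S\circ\tsi$.

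For fixed relations, take $P\in\iFix_w\cong\ker F_w^{Si}$. Then
\[
F_w^S(\tsi P)=F_w^{Si}(\ts\tsi P)=F_w^{Si}(P)=0,
\]
so $\tsi$ maps $\ker F_w^{Si}$ into $\ker F_w^S$. In the same way, $\ts$ maps $\ker F_w^S$ into $\ker F_w^{Si}$. Since $\ts$ and $\tsi$ are mutually inverse, $\tsi\colon\iFix_w\to\Fix_w$ is an isomorphism.

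For binary relations, identify $(P,Q)\in\ibr_w$ with an element of $\ker(\varphi\circ(F_w^{Si}\oplus F_w^{Si}))$. The identity $F_w^{Si}=F_w^S\circ\tsi$ gives
\[
\varphi\circ(F_w^{Si}\oplus F_w^{Si})=\varphi\circ(F_w^S\oplus F_w^S)\circ(\tsi\oplus\tsi).
\]
Because $\tsi\oplus\tsi$ is an automorphism of $\cS_w^K\oplus\cS_w^K$ with inverse $\ts\oplus\ts$, it carries one kernel bijectively onto the other. Concretely, each equation $R(d)$ is transformed termwise via $\Si_d=\rS_d\circ\tsi$ and $\Si_{d+1}=\rS_{d+1}\circ\tsi$. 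Hence $(\tsi\oplus\tsi)(\ibr_w)=\br_w$. The fixed case is recovered as the subspace $Q=0$, which is preserved.

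There is no real obstacle once \Cref{thm:transfer1} is granted. The only point needing care is that the identities $\rS_d=\Si_d\circ\ts$ and $\Si_d=\rS_d\circ\tsi$ hold for all $d\in\bbZ$ simultaneously, so the shifted index $d+1$ in the binary relations causes no trouble. This holds by construction, as noted after \Cref{thm:SSiequ}.
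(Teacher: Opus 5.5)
Your proposal is correct and follows the same route as the paper, whose proof simply combines \eqref{eq:auto1}, \eqref{eq:auto2} and \Cref{thm:transfer1}. You have written out the kernel-transport step that the paper leaves implicit, namely the identity $F_w^{Si}=F_w^S\circ\tsi$ together with the invertibility of $\tsi\oplus\tsi$ with inverse $\ts\oplus\ts$.
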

\begin{proof}
    Combining \eqref{eq:auto1}, \eqref{eq:auto2} and \Cref{thm:transfer1} yields the desired result.
\end{proof}

The final step is to transfer the operators $\mathcal{B},\mathcal{B}^{\ast}$ and $\mathcal{C}$.

\begin{theorem}\label{thm:operatortransfer}
    For $\mathcal{X}\in\{\mathcal{B}, \mathcal{C}\}$, $(\tsi\oplus\tsi)\circ\mathcal{X}_W^{Si}=\mathcal{X}_{\tsi(W)}^S\circ(\tsi\oplus\tsi)$.
\end{theorem}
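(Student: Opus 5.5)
The plan is to verify the identity directly on the explicit descriptions \eqref{eq:defBS}, \eqref{eq:defCS}, \eqref{eq:defBSi}, \eqref{eq:defCSi}. Everything then reduces to the fact that $\tsi$ intertwines the three $\Si$-products with the three $\rS$-products. So the first step is to prove the analogue of \Cref{lem:ts} for $\tsi$: for $\fa\in\cI_{>0}$ and $\fb\in\cI$,
\[
\tsi(\fa\Sis\fb)=\tsi(\fa)\Ss\tsi(\fb),\quad \tsi(\fa\Sit\fb)=\tsi(\fa)\St\tsi(\fb),\quad \tsi(\fa\Sic\fb)=\tsi(\fa)\Sc\tsi(\fb).
\]
This can be deduced formally from \Cref{lem:ts} together with \Cref{thm:transfer1}, with no new induction. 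By \Cref{thm:transfer1}, $\ts$ is bijective with inverse $\tsi$, and it is weight-preserving. Write $\fa=\ts(\fa')$ and $\fb=\ts(\fb')$ with $\fa'=\tsi(\fa)$ and $\fb'=\tsi(\fb)$ (extended linearly). Then \Cref{lem:ts} gives, for example, $\fa\Sit\fb=\ts(\fa'\St\fb')$. Applying $\tsi$ to both sides yields $\tsi(\fa\Sit\fb)=\fa'\St\fb'$.

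One subtlety must be checked here. \Cref{lem:ts} is stated for tuples $\fa\in\cI_{>0}$, whereas $\tsi(\fa)$ is a linear combination of tuples of positive weight. Since all the products are bilinear, this is harmless. The case $\fb=\emptyset$ is consistent because $\tsi(\emptyset)=\emptyset$.

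With the lemma in hand, take $R=(P,Q)\in\ibr_w$ and $W\neq\emptyset$ (for $W=\emptyset$ both sides are the identity, since $\tsi(\emptyset)=\emptyset$). For $\cC$ we have
\[
(\tsi\oplus\tsi)\icC_W(P,Q)=(\tsi(P\Sic W)+\tsi(P\Sit W),\ \tsi(Q\Sit W)).
\]
By the lemma this equals
\[
(\tsi P\Sc\tsi W+\tsi P\St\tsi W,\ \tsi Q\St\tsi W)=\cC_{\tsi(W)}(\tsi P,\tsi Q),
\]
using \eqref{eq:defCS} extended linearly in $W$. The $\cB$ case is identical: the lemma turns $W\Sit P+W\Sic Q+W\Sit Q$ into $\tsi W\St\tsi P+\tsi W\Sc\tsi Q+\tsi W\St\tsi Q$, and the second component is $0$ on both sides.

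The main obstacle is a bookkeeping one. The formulas \eqref{eq:defBS}–\eqref{eq:defCS} are stated for a tuple $W$, while $\tsi(W)$ is a linear combination of tuples. I will define $\cB_X$ and $\cC_X$ for $X\in\cS^K$ by linearity in $X$ and check that this agrees with the original definitions, namely $\cB_X(R)(d)=\rS_d(X)\sum_{j<d}R(j)$, and similarly for $\cC_X$, because $\rS_d$ and $\rS_{<d}$ are linear. The product identities of \Cref{thm:product} then justify the rewriting for linear combinations. This also needs $\tsi(W)$ to have no $\emptyset$-component when $W\neq\emptyset$, which holds because $\tsi$ preserves weight.

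Finally, it would be cleaner, and a good consistency check, to derive the identity at the level of power sums. We have $\Si_d=\rS_d\circ\tsi$, and $\tsi\oplus\tsi$ identifies $\ibr_w$ with $\br_w$ by \Cref{cor:eqrelation}. Hence the relation $\icB_W(R)(d)=\Si_d(W)\sum_{j<d}R(j)$ is literally the same equation in $K_\infty$ as $\rS_d(\tsi W)\sum_{j<d}(\tsi\oplus\tsi)R(j)$, and likewise for $\cC$. I will use this second viewpoint only as confirmation, since a binary relation is recorded by its formal pair $(P,Q)$ and not just by its values, and the product lemma is what pins down the formal pair.
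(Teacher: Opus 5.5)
Your proposal is correct and is essentially the paper's argument: the paper's one-line proof also just combines the explicit formulas \eqref{eq:defBS}, \eqref{eq:defCS}, \eqref{eq:defBSi}, \eqref{eq:defCSi} with \Cref{lem:ts}. You also make explicit two points the paper leaves implicit: transporting \Cref{lem:ts} from $\ts$ to its inverse $\tsi$ via \Cref{thm:transfer1}, and extending $\cB_X,\cC_X$ linearly in $X$ so that $X=\tsi(W)\in\cS^K$ makes sense.
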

\begin{proof}
    From \eqref{eq:defBS}, \eqref{eq:defCS}, \eqref{eq:defBSi}, \eqref{eq:defCSi} and \Cref{lem:ts}, the conclusion follows.
\end{proof}
\begin{remark}
    Unfortunately, \Cref{thm:operatortransfer} does not hold for $\mathcal{B}^{\ast}$. In fact, for a tuple $W=(w_1,\cdots,w_r)$, we have
    \[(\tsi\oplus\tsi)\circ\icBs_W=\cB_{\tsi(w_1)}\circ\cdots\cB_{\tsi(w_r)}\circ(\tsi\oplus\tsi),\]
    but $\tsi(w_i)$ is not a singleton tuple in general. So $\cB_{\tsi(w_1)}\circ\cdots\cB_{\tsi(w_r)}$ is not a linear combination of $\cBs$ generally.
\end{remark}

Now we can state our transfer theorem. We first state the CMPLV version of Todd's relation conjecture.

\begin{theorem}\label{thm:ToddCMPLV}
    For $n\in\bbZ_{+}$, the $K$-linear relation space among CMPLVs of weight $q+n$ is
spanned by
\begin{equation}\label{eq:ToddCMPLV-B-family-intro}
 \{\overline{\icB_U(R_1^{\Si})}: U\in\cI_n\}\cup\{\overline{\icC_V(R_1^{\Si})}: V\in\cI_n\}\cup
 \{\overline{\icB_U\circ\icC_V(R_1^{\Si})}:U\in\cI_l, V\in\cI_m, l+m=n\}.
\end{equation}
It is also spanned when every $\icB$ in \eqref{eq:ToddCMPLV-B-family-intro} is replaced
by $\icBs$.
\end{theorem}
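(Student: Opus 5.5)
The strategy is a dimension count. The dimension of the space of $K$-linear relations is known. The relations in \eqref{eq:ToddCMPLV-B-family-intro} can be made explicit in the simple $\Si$-combinatorics. I then show they span a space of the right dimension.

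\textbf{Step 1 (target dimension).} By \Cref{thm_CCMIKLNP} and \Cref{thm:SSiequ}, the CMPLVs of weight $w$ span a $K$-space of dimension $d_w^{(K)}$. Hence the relation space $\cR^{\Li}_w$, the kernel of $\cS_w^K\to K_\infty$, $\fs\mapsto\Li(\fs)$, has dimension $2^{w-1}-d_w^{(K)}$.

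\textbf{Step 2 (triangularity).} Order tuples of weight $w=q+n$ by a suitable total order (depth first, then lexicographic). From \eqref{eq:defBSi} and \eqref{eq:defCSi}, each generator has a computable leading term.

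Write $R_1^{\Si}=((q),D_1(1,q-1))$. Then
\[
\overline{\icB_U(R_1^{\Si})}=U\Sit(q)+U\Sic D_1(1,q-1)+U\Sit D_1(1,q-1).
\]
In this sum $\Sit$, $\Sic$ are shuffles of the simple $\Si$-type, with no $\Delta$-terms. Likewise $\overline{\icC_V(R_1^{\Si})}$ and the composites are explicit.

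The aim is to show that, after suitable normalisation, these relations have pairwise distinct leading tuples. Those leading tuples should be exactly the complement of a basis-type set $\mathcal B_w$ of cardinality $d_w^{(K)}$. The natural candidate is a CMPLV analogue of $\BTTw$, namely tuples $(\fh,z)$ with entries constrained to $[q]$ and last entry in $[q-1]$.

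Concretely, a non-basis tuple is one of three kinds:
\begin{enumerate}
\item it contains an entry $>q$ in some position;
\item it contains an entry equal to $q$ followed by something;
\item it ends with $q$, or with an entry $\ge q$.
\end{enumerate}
I would locate the first offending position. This splits the tuple as $(U,\text{offending part},V)$. I then match it to $\icB_U\circ\icC_V(R_1^{\Si})$, with $U$ or $V$ empty in the edge cases. The $\Sit$ with prefix $U$ keeps $U$ in front, and the $\Sit$/$\Sic$ with $V$ inserts $V$ at the end. So the leading term should be $(U,q,V)$ or $(U,q+v_1,\ldots)$ from the $\Sic$-merge.

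\textbf{Step 3 (conclusion).} Triangularity gives at least $2^{w-1}-d_w^{(K)}$ linearly independent relations inside $\cR^{\Li}_w$. Step 1 then forces equality, which proves the $\icB$-statement.

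For the $\icBs$-version there are two routes. One is to redo Step 2: $\icBs_U$ is $(u_1)\Sit\cdots\Sit(\,\cdot\,)$, which has the same leading prefix $U$. The other is to use a lemma expressing $\icB_U$ modulo lower terms as $\icBs_U$ plus combinations of $\icBs_{U'}\circ\icC_{V'}$. The leading-term argument is the cleaner of the two.

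\textbf{Main obstacle.} The hard step is Step 2: choosing an order and an exact bijection between generators and non-basis tuples so that the leading coefficients are nonzero. Leading coefficients such as $D_1$, or $1+D_1$ from a $\Sic$-merge, must not cancel.

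The counting identity $\#\{\text{generators used}\}=2^{w-1}-d_w^{(K)}$ must also hold, and the generators are overcounted. One needs $\#\{U,V: \wt U+\wt V=n\}$ indexed so that the leading tuples are distinct. I would first try ordering by depth, so that $\Sic$-terms of lower depth are subleading. The leading term is then the pure concatenation $(U,q,V)$ or $(U,1,q-1,V)$ with coefficient $1$ or $D_1$. The set of such concatenations for $\wt U+\wt V=n$ should be exactly the tuples of weight $w$ whose first "$\ge q$ event" signals non-basis. I would verify this by the recursion $d_w=2d_{w-1}-d_{w-q-1}$.
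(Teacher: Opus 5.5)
Your framework is exactly the paper's: $\dim_K\cR^{\Li}_w=2^{w-1}-d_w^{(K)}$, together with a family of that size that is unitriangular with respect to a depth/lex order. But Step 2, which carries all of the content, is left as an aim, and the concrete choices you sketch for it would fail.

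\textbf{The prefix claim is false for $\icB_U$.} The operator $\icB_U$ does \emph{not} keep $U$ as a prefix: $U\Sit P=(u_1,U^-\Sis P)$. So for $\dep(U)\ge 2$ the tail $U^-$ is stuffled into $P$, producing every interleaving and merge of $U^-$ with the terms of $P$. The matching ``non-basis tuple $(U,\ast,V)\leftrightarrow\icB_U\circ\icC_V(R_1^{\Si})$'' then has no reason to be triangular. Only $\icBs_U$ has the prefix property, because $(u)\Sit X=(u,X)$. This is why the paper runs the triangular argument for the $\icBs$-family first. It then obtains the $\icB$-part from the exact operator identity $\icBs_{(s_1)}\circ\icB_{\fs^-}=\icB_{\fs}-\icB_{(s_1)}\circ\icC_{\fs^-}$. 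Together with $\icC_U\circ\icC_V=\icC_{U\Sis V}$ and induction on depth, this identity writes every $\icBs_W$ as an $\Fq$-combination of operators $\icB_U\circ\icC_V$. So the two parts must be proved in the opposite order from yours, and the operator lemma you mention only in passing is an essential ingredient, not an alternative route.

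\textbf{Top-depth leading terms do not work.} For $\fs=(\fn,q+r,\fm)$, the relation $\icBs_{\fn}\circ\icC_{(r,\fm)}(R_1^{\Si})$ has formal sum
\[
(\fn,q+r,\fm)+(\fn,q,r,\fm)+D_1\bigl(\fn^+,(q-1)\Sis(r,\fm)\bigr)+D_1\bigl(\fn,1,(q-1)\Sis(r,\fm)\bigr).
\]
Taking the top-depth (concatenation) terms of this sum as leading fails for two reasons.
\begin{itemize}
\item The top-depth part is a shuffle whose multiplicities can vanish in characteristic $p$: for $q=2$, $(q-1)\Sis(q-1)=(1)\Sis(1)=2(1,1)+(2)=(2)$.
\item Your proposed leading tuples $(U,1,q-1,V)$ collide for $(U,V)=(\emptyset,(1,q-1,V'))$ and $((1,q-1),V')$. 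For $q=3$, both $\icC_{(1,2,1)}(R_1^{\Si})$ and $\icBs_{(1,2)}\circ\icC_{(1)}(R_1^{\Si})$ have $D_1(1,2,1,2,1)$ as their (depth, lex)-largest term.
\end{itemize}
Your list of non-basis tuples is also off: an inner entry equal to $q$ is allowed, e.g.\ $(q,1)$ is a Todd--Thakur basis tuple.

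\textbf{The missing idea} is to use the \emph{minimal} term instead. In (depth, lex) order, the smallest term of the displayed sum is the $\Sic$-merged tuple $(\fn,(q)\Sic(r,\fm))=(\fn,q+r,\fm)=\fs$, with coefficient $1$. Every other term either has larger depth, or has the same depth and begins with $\fn^+$, which is lexicographically larger than $\fn$. Similarly, $\icBs_{\fn}(R_1^{\Si})$ has minimal term $(\fn,q)$, and $\icC_{(r,\fm)}(R_1^{\Si})$ has minimal term $(q+r,\fm)$. Together these give a family indexed by, and triangular over, exactly $\cI_w\setminus\BTTw$, so independence and the dimension count are immediate.
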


\begin{theorem}\label{thm:transfer}
    The $\cB$-part of \Cref{thm:Todd} and the $\icB$-part of \Cref{thm:ToddCMPLV} are equivalent.
\end{theorem}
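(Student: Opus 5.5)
Our plan is to carry everything through the isomorphism of \Cref{cor:eqrelation} and the intertwining identity of \Cref{thm:operatortransfer}, and then compare the two spanning families.

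\textbf{Step 1: binary relations go to binary relations.} Summing a binary relation over $d$ gives a relation among values. For $R=(P,Q)\in\ibr_w$ we have $\overline{R}=P+Q$ as an element of the kernel of $\Li:\cS_w^K\to K_\infty$. Since $\rS_d=\Si_d\circ\tsi^{-1}$ on the relevant spaces, $\tsi$ restricts to an isomorphism from $\ker(\Li|_{\cS_w^K})=\cR_w^{\Li}$ onto $\ker(\zeta|_{\cS_w^K})=\cR_w^{\zeta}$. This is because $\zeta\circ\tsi=\Li$ and $\tsi$ is bijective by \Cref{thm:transfer1}. Moreover
\[
\overline{(\tsi\oplus\tsi)(R)}=\tsi(\overline{R}),
\]
so the bar operation commutes with the transfer.

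\textbf{Step 2: the fundamental relation is preserved.} We must show $(\tsi\oplus\tsi)(R_1^{\Si})=R_1^{\rS}$, that is, $\tsi(q)=(q)$ and $\tsi(1,q-1)=(1,q-1)$.
\begin{itemize}
\item By the recursion, $\tsi(w)=\tsi(w-1)\Sc(1)$. Inductively $\tsi(w)=(w)$ as long as $w-1+1=w\le q$, because the $\Delta^i_{w-1,1}$ terms vanish unless $q-1\mid i$ with $0<i<w$. That first happens at $i=q-1$, which requires $w\ge q$.
\item At $w=q$ we get $\Delta^{q-1}_{q-1,1}=\binom{q-2}{q-2}(-1)^{q-2}+\binom{q-2}{0}=(-1)^q+1$. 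This is $0$ in characteristic $p$: for odd $q$ it is $0$ as an integer, and for $q$ even it equals $2=0$ in characteristic $2$.
\item For the depth-two tuple, $\tsi(1,q-1)=(1)\St\tsi(q-1)=(1)\St(q-1)=(1,q-1)$.
\end{itemize}
This small computation is the one place where care is needed. If it failed, we would instead show that $(\tsi\oplus\tsi)(R_1^{\Si})$ is a nonzero multiple of $R_1^{\rS}$, or that the two are related by a fixed relation absorbed elsewhere.

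\textbf{Step 3: transfer the spanning families.} By \Cref{thm:operatortransfer}, for $U,V$ we have
\[
(\tsi\oplus\tsi)\bigl(\icB_U\circ\icC_V(R_1^{\Si})\bigr)=\cB_{\tsi(U)}\circ\cC_{\tsi(V)}(R_1^{\rS}),
\]
and likewise for the single-operator families. The operators $\cB_W$ and $\cC_W$ depend linearly on $W\in\cS^K$. Since $\tsi$ restricts to an automorphism of $\cS_n^K$ (it is weight preserving), the $K$-span of $\{\cB_{\tsi(U)}:U\in\cI_n\}$ equals the span of $\{\cB_{U}:U\in\cI_n\}$, and similarly for $\cC$. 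For the composite family, the span of $\{\cB_{\tsi(U)}\circ\cC_{\tsi(V)}\}$ over $U\in\cI_l$, $V\in\cI_m$ equals the span of $\{\cB_U\circ\cC_V\}$, because the map $(U,V)\mapsto\cB_U\circ\cC_V$ is bilinear and $\tsi\otimes\tsi$ is an automorphism of $\cS_l^K\otimes\cS_m^K$.

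\textbf{Step 4: conclude.} Applying $\overline{\,\cdot\,}$ and Step 1, $\tsi$ maps the $K$-span of the family \eqref{eq:ToddCMPLV-B-family-intro} onto the $K$-span of \eqref{eq:Todd-B-family-intro}, and maps $\cR_{q+n}^{\Li}$ onto $\cR_{q+n}^{\zeta}$. Hence one span equals the full relation space if and only if the other does, which is the claimed equivalence.
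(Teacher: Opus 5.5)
Your proposal is correct and follows the same route as the paper. You verify $(\tsi\oplus\tsi)(R_1^{\Si})=R_1^{\rS}$; your computation $\Delta^{q-1}_{q-1,1}=(-1)^q+1=0$ in $\Fq$ is exactly the ``direct calculation'' the paper omits, so the fallback remark in Step~2 is unnecessary. You then combine the transfer isomorphism with the intertwining identity of \Cref{thm:operatortransfer}, and your Steps~1 and~3 just make explicit what the paper's one-line proof leaves implicit: $\tsi$ carries $\cR^{\Li}_w$ onto $\cR^{\zeta}_w$ compatibly with $\overline{\,\cdot\,}$ (via $\zeta\circ\tsi=\Li$), and reindexing by the automorphisms $\tsi$ of $\cS^K_n$ and $\tsi\otimes\tsi$ of $\cS^K_l\otimes\cS^K_m$ leaves the spans unchanged.
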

\begin{proof}
    Direct calculation shows $(\tsi\oplus\tsi)(R_1^{Si})=R_1^S$, then \Cref{cor:eqrelation} and \Cref{thm:operatortransfer} yield the desired result.
\end{proof}

\section{Proof of Theorem~2.11}\label{sec:proof}

\subsection{The $\icBs$-part}
We first prove the $\icBs$-part by using the $K$-linear relations found by Im–Kim–Ngo Dac in \cite{IKN2026}.

Given a tuple $\fs=(s_1,\cdots,s_r)\in\cI_w\setminus\BTTw$ (then $w\geq q$), we may write $\fs=(\fn,q+r,\fm)$, where $\fn=(s_1,\cdots,s_{k-1})$ with $1\leq s_1,\cdots,s_{k-1}\leq q$; either $r>0$, or $r=0$ and $\fm=\emptyset$ (in this case we set $(r,\fm)=\emptyset$). Then $\fs$ necessarily belongs to one of the following five types:
\begin{alignat*}{2}
    &\text{(Type 0)}\quad \fs=(w); &&\\
    &\text{(Type 1)}\quad \fs=(\fn,q), &&\text{with }\dep(\fn)\geq1;\\
    &\text{(Type 2)}\quad \fs=(\fn,q+r), &&\text{with }\dep(\fn)\geq1,\, r\geq1;\\
    &\text{(Type 3)}\quad \fs=(q+r,\fm), &&\text{with }r\geq1,\, \dep(\fm)\geq1;\\
    &\text{(Type 4)}\quad \fs=(\fn,q+r,\fm),\qquad &&\text{with }\dep(\fn)\geq1,\, r\geq1,\, \dep(\fm)\geq1;
\end{alignat*}

Set $\fn^+=(s_1,\cdots,s_{k-1}+1)$. For $\fs$ of Type 2 or Type 4, $\icBs_{\fn}\circ\icC_{(r,\fm)}(R^{\Si}_1)$ yields:
\begin{equation}\label{eq:typeBC}
    \Si_d(\fs)+\Si_d(\fn,q,r,\fm)+D_1\Si_d(\fn^+,(q-1)\Sis(r,\fm))+D_1\Si_d(\fn,1,(q-1)\Sis(r,\fm))=0.
\end{equation}

For $\fs$ of Type 0 or Type 3, $\icC_{(r,\fm)}(R^{\Si}_1)$ yields:
\begin{equation}\label{eq:typeC}
    \Si_d(\fs)+\Si_d(q,r,\fm)+D_1\Si_{d+1}(1,(q-1)\Sis(r,\fm))=0.
\end{equation}

For $\fs$ of Type 1, $\icBs_{\fn}(R^{\Si}_1)$ yields:
\begin{equation}\label{eq:typeB}
    \Si_d(\fs)+D_1\Si_d(\fn^+,q-1)+D_1\Si_d(\fn,1,q-1)=0.
\end{equation}

We denote the above binary relations by $\rho(\fs)=(P_{\fs},Q_{\fs})$.

We recall the (depth, lex)-order in \cite[Section~4.3]{IKN2026}:
\begin{definition}
The (depth, lex)-order $\prec$ on $\cI$ is defined as follows.
For any two tuples $\fs, \ft \in \cI_w$, define $\fs \prec \ft$ if and only if
\begin{itemize}
\item either $\dep(\fs) < \dep(\ft)$,
\item or $\dep(\fs) = \dep(\ft)$ and $\fs$ is lexicographically smaller than $\ft$.
\end{itemize}
\end{definition}

Note that in \eqref{eq:typeBC}--\eqref{eq:typeB}, all tuples other than $\fs$ are greater than $\fs$ under the (depth, lex)-order, which implies these binary relations are $K$-linearly independent. The same conclusion is true for their corresponding relations of CMPLVs. Comparing dimensions, we complete the proof the $\icBs$-part of \Cref{thm:ToddCMPLV}

\subsection{The $\icB$-part}\label{sec:icB}
Now we prove the $\icB$-part. The key point is that we may replace $\icBs$ with $\icB$ and $\icC$.

\begin{lemma}\label{lem:BsB}
    For $\fs=(s_1,\fs^-)\in\cI$ with $\fs^-\neq\emptyset$,
    \[\icBs_{(s_1)}\circ\icB_{\fs^-}=\icB_{\fs}-\icB_{(s_1)}\circ\icC_{\fs^-}\]
\end{lemma}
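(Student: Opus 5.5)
We argue directly at the level of the defining identities $R(d)$, not through the product formulas; both sides are linear in $R$, so we fix $R\in\ibr_w$ and compare the $d$-th equations. Write $\Sigma_R(d)=\sum_{j<d}R(j)$ for the partial sum of the relations $R(j)$, viewed as a formal expression in the $\Si$-values.

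First compute the left-hand side. By definition $\icB_{\fs^-}(R)(d)=\Si_d(\fs^-)\Sigma_R(d)$. Applying $\icBs_{(s_1)}=\icB_{(s_1)}$ gives, at level $d$,
\[
\Si_d(s_1)\sum_{k<d}\Si_k(\fs^-)\Sigma_R(k).
\]
The key step is to split the inner double sum over pairs $j<k<d$ by exchanging the order of summation:
\[
\sum_{k<d}\Si_k(\fs^-)\sum_{j<k}R(j)=\sum_{j<d}R(j)\sum_{j<k<d}\Si_k(\fs^-)
=\sum_{j<d}R(j)\bigl(\Si_{<d}(\fs^-)-\Si_{<j+1}(\fs^-)\bigr).
\]

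Next, the first piece gives $\Si_d(s_1)\Si_{<d}(\fs^-)\Sigma_R(d)=\Si_d(\fs)\Sigma_R(d)$, using the factorization $\Si_d(\fs)=\Si_d(s_1)\Si_{<d}(\fs^-)$. This is exactly $\icB_{\fs}(R)(d)$. For the second piece, note that $R(j)\Si_{<j+1}(\fs^-)=\icC_{\fs^-}(R)(j)$ by definition of $\icC$. Hence the second piece is $-\Si_d(s_1)\sum_{j<d}\icC_{\fs^-}(R)(j)$, which is $-\icB_{(s_1)}(\icC_{\fs^-}(R))(d)$. Summing the two pieces gives the claimed identity for every $d$, hence as binary relations.

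The only delicate point is legitimacy of the interchange: for each fixed $d$ all sums are finite, since $\Si_k$ vanishes for $k<\dep-1$ and $R(j)$ is trivial for very negative $j$. We must also confirm that the operators are well defined as maps on formal pairs $(P,Q)$ rather than merely on the numerical identities. To do this, we either repeat the computation with the formula \eqref{eq:defBSi}, expanding via the product rules and associativity of $\Sis$, or note that everything is linear and determined by the $d$-indexed expressions. We expect the bookkeeping of boundary indices, namely $\Si_{<j+1}$ versus $\Si_{\le j}$, to be the main thing to check carefully.
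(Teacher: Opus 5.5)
\textbf{There is a genuine gap: your computation establishes an identity of values, not of the coefficient pairs the lemma is about.} The interchange of summation is correct as an identity between the $d$-th expressions, and the boundary bookkeeping $\Si_{<j+1}$ versus $\Si_{\le j}$ is fine. The problem is the step you defer at the end, which is where all the content lies.

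The operators act on coefficient pairs $(P,Q)\in\cS^K_w\oplus\cS^K_w$ via \eqref{eq:defBSi} and \eqref{eq:defCSi}. What is needed downstream is equality of these pairs: \Cref{thm:BsB}, and then the $\icB$-part of \Cref{thm:ToddCMPLV}, where relations are elements of $\cR^{\Li}_w\subset\cS^K_w$. With the genuine values $l_k^{-1}\in K$, your computation only shows $\Psi(\mathrm{LHS})=\Psi(\mathrm{RHS})$, where $\Psi(P,Q)=(\Si_d(P)+\Si_{d+1}(Q))_{d\in\bbZ}$. But $\ker\Psi$ is exactly $\ibr_w$.

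\begin{itemize}
\item On an actual binary relation every $R(d)$ reads $0=0$, so both sides vanish for every $d$ and the comparison carries no information.
\item Even after extending the operators to arbitrary pairs, you only learn that the two sides differ by some binary relation.
\item Your second option, ``everything is linear and determined by the $d$-indexed expressions'', is false for the true $\Si_d$. Every $\icB_W(R)$ is a fixed relation; for example $\icB_{(1)}(R_1^{\Si})=((1,q)+D_1(2,q-1)+D_1(1,1,q-1),0)$ is a nonzero pair all of whose $d$-th expressions vanish.
\end{itemize}

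\textbf{Two ways to close the gap.}

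\emph{First option (the paper's proof).} Expand all three operators with \eqref{eq:defBSi} and \eqref{eq:defCSi}. Use $\fs^-\Sis P=\fs^-\Sit P+P\Sit\fs^-+\fs^-\Sic P$ to reduce the $P$-part to $(s_1,\fs^-\Sit P)$. Then check on a tuple $\ft$ that
\[
\fs\Sic\ft+\fs\Sit\ft-(s_1)\Sic(\ft\Sit\fs^-)-(s_1)\Sit(\ft\Sit\fs^-)=(s_1,\fs^-\Sic\ft+\fs^-\Sit\ft).
\]

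\emph{Second route (keeps your argument).} Make ``formal expression'' literal by replacing $l_k^{-1}$ with independent indeterminates $x_k$ in $K[x_0,x_1,\dots]$.
\begin{itemize}
\item The product rules for $\Sis,\Sit,\Sic$ are purely combinatorial and use no property of $l_k$. So \eqref{eq:defBSi} and \eqref{eq:defCSi} still compute the operators on arbitrary pairs, and your interchange goes through verbatim.
\item Evaluation is now faithful in positive weight. The monomial $x_{k_1}^{t_1}\cdots x_{k_r}^{t_r}$ (with $k_1>\cdots>k_r\ge0$, $t_i\ge1$) occurs in $\Si_d(\fs)$ only for $d=k_1$ and $\fs=(t_1,\dots,t_r)$. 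Hence $\Si_d(P)+\Si_{d+1}(Q)=0$ for all $d$ forces $P=Q=0$.
\end{itemize}
With this faithfulness lemma your route is a clean alternative to the paper's bookkeeping, since it explains the identity as a reindexing of $\sum_{j<k<d}$. Without it, the argument says nothing about the pairs.
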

\begin{proof}
    Given a binary relation $(P,Q)\in\ibr_w$, we have
    \begin{equation}
        \icBs_{(s_1)}\circ\icB_{\fs^-}(P,Q)=((s_1,\fs^-\Sit P+\fs^-\Sic Q+\fs^-\Sit Q),0);
    \end{equation}
    \begin{align}
        \icB_{\fs}(P,Q)=&(\fs\Sit P+\fs\Sic Q+\fs\Sit Q,0)\\=&((s_1,\fs^-\Sis P)+\fs\Sic Q+\fs\Sit Q,0)
    \end{align}
    \begin{equation}
        \icBs_{(s_1)}\circ\icC_{\fs^-}(P,Q)=((s_1,P\Sic\fs^-+P\Sit\fs^-)+(s_1)\Sic(Q\Sit\fs^-)+(s_1)\Sit(Q\Sit\fs^-),0);
    \end{equation}

    Since
    \[\fs^-\Sis P=\fs^-\Sit P+P\Sit\fs^-+\fs^-\Sic P.\]

    We have 
    \begin{align*}
        \icB_{\fs}(P,Q)-\icB_{(s_1)}\circ\icC_{\fs^-}(P,Q)&=\\((s_1,\fs^-\Sit P)+\fs\Sic Q&+\fs\Sit Q-(s_1)\Sic(Q\Sit\fs^-)-(s_1)\Sit(Q\Sit\fs^-),0)
    \end{align*}

    It suffices to show
    \[
    \fs\Sic Q+\fs\Sit Q-(s_1)\Sic(Q\Sit\fs^-)-(s_1)\Sit(Q\Sit\fs^-)=(s_1,\fs^-\Sic Q+\fs^-\Sit Q),
    \]
    We prove it for $Q=\ft=(t_1,\ft^-)$, then it holds for general $Q$.
    \begin{align*}
        \fs\Sic \ft+\fs\Sit \ft&-(s_1)\Sic(\ft\Sit\fs^-)-(s_1)\Sit(\ft\Sit\fs^-)\\&=(s_1+t_1,\fs^-\Sis\ft^-)+(s_1,\fs^-\Sis\ft)-(s_1+t_1,\ft^-\Sis\fs^-)-(s_1,t_1,\ft^-\Sis\fs^-)\\&=(s_1,\fs^-\Sic\ft+\fs^-\Sit\ft+\ft\Sit\fs^-)-(s_1,\ft\Sit\fs^-)\\&=(s_1,\fs^-\Sic \ft+\fs^-\Sit \ft).
    \end{align*}
\end{proof}

We establish the following replacement theorem:
\begin{theorem}\label{thm:BsB}
    For $W\in\cI_w$, there exist $a_{ij}\in \Fq$ and tuple pairs $(U_i,V_j)$ with $U_i\neq\emptyset$ and $\wt(U_i)+\wt(V_j)=w$ such that
    \[
    \icBs_W=\sum a_{ij}\icB_{U_i}\circ\icC_{V_j}
    \]
\end{theorem}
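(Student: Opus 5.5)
We argue by induction on the depth $r=\dep(W)$, proving the slightly stronger statement that $\icBs_W$ lies in the $\Fq$-span (indeed the $\bbZ$-span) of operators $\icB_U\circ\icC_V$ with $U\neq\emptyset$ and $\wt(U)+\wt(V)=\wt(W)$. For $r=1$ there is nothing to do: $\icBs_{(w_1)}=\icB_{(w_1)}=\icB_{(w_1)}\circ\icC_{\emptyset}$, since $\icC_\emptyset$ is the identity.

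For the inductive step, write $W=(w_1,W^-)$ with $\dep(W^-)=r-1\geq 1$. By definition $\icBs_W=\icB_{(w_1)}\circ\icBs_{W^-}$, and the induction hypothesis gives
\[
\icBs_W=\sum a_{ij}\,\icB_{(w_1)}\circ\icB_{U_i}\circ\icC_{V_j}.
\]
We then apply \Cref{lem:BsB} to the tuple $\fs=(w_1,U_i)$, which has $\fs^-=U_i\neq\emptyset$. This gives
\[
\icB_{(w_1)}\circ\icB_{U_i}=\icB_{(w_1,U_i)}-\icB_{(w_1)}\circ\icC_{U_i},
\]
and therefore
\[
\icB_{(w_1)}\circ\icB_{U_i}\circ\icC_{V_j}=\icB_{(w_1,U_i)}\circ\icC_{V_j}-\icB_{(w_1)}\circ\icC_{U_i}\circ\icC_{V_j}.
\]
The first term already has the required form: $(w_1,U_i)$ is nonempty, and the weights add up to $w_1+\wt(U_i)+\wt(V_j)=\wt(W)$.

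The main obstacle is the second term, which involves the composite $\icC_{U_i}\circ\icC_{V_j}$. We need to show that this is a $\bbZ$-linear combination of single operators $\icC_V$ with $\wt(V)=\wt(U_i)+\wt(V_j)$. We would use the description \eqref{eq:defCSi}, or directly the definition. For a relation $R$,
\[
\icC_{U}\circ\icC_{V}(R)(d)=R(d)\,\Si_{<d+1}(V)\,\Si_{<d+1}(U),
\]
and since $\Si_{<d+1}(V)\,\Si_{<d+1}(U)=\Si_{<d+1}(V\Sis U)$, this equals $\icC_{V\Sis U}(R)(d)$. So $\icC_U\circ\icC_V=\icC_{V\Sis U}$, extended linearly in the subscript. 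Because $\Sis$ has only coefficients $1$ and preserves weight, $V\Sis U$ is a sum of tuples of weight $\wt(U)+\wt(V)$ with positive integer coefficients. Some care is needed to check that this identity holds at the level of formal pairs $(P,Q)$, and not merely as an identity of sequences. Using \eqref{eq:defCSi}, this amounts to the formal identities
\[
(P\Sic V+P\Sit V)\Sic U+(P\Sic V+P\Sit V)\Sit U=P\Sic(V\Sis U)+P\Sit(V\Sis U)
\]
and
\[
(Q\Sit V)\Sit U=Q\Sit(V\Sis U).
\]
Both can be checked directly from the recursive definitions of $\Sis,\Sit,\Sic$ on tuples, in the same style as the proof of \Cref{lem:tst}.

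Substituting $\icC_{U_i}\circ\icC_{V_j}=\icC_{V_j\Sis U_i}$ into the second term, every term becomes $\icB_{U}\circ\icC_{V}$ with $U\in\{(w_1,U_i),(w_1)\}$ nonempty and total weight $\wt(W)$, with coefficients in $\bbZ$. Taking these coefficients modulo $p$ gives $a_{ij}\in\Fq$, which completes the induction.
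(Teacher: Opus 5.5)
Your proposal is correct and follows essentially the same argument as the paper: induction on depth, \Cref{lem:BsB} applied to $(w_1,U_i)$, and the identity $\icC_U\circ\icC_V=\icC_{U\Sis V}$. Your explicit formal-level check of that last identity, which the paper only asserts ``by definition,'' is a valid and welcome addition.
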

\begin{proof}
    We prove this by induction on $\dep(W)$. When $\dep(W)=1$, since $\icB_{(w)}=\icBs_{(w)}$, there is nothing to prove.

    Assume the assertion holds for tuples of depth less than r, for $W=(w_1,\cdots,w_r)=(w_1,W^-)$, we have $\icBs_{W^-}=\sum a_{ij}\icB_{U_i}\circ\icC_{V_j}$, then by \Cref{lem:BsB}
    \begin{align}
    \icBs_W&=\icBs_{(w_1)}\circ\icBs_{W^-}\\&=\sum a_{ij}\icBs_{(w_1)}\circ\icB_{U_i}\circ\icC_{V_j}\\&=\sum a_{ij}(\icB_{(w_1,U_i)}-\icB_{(w_1)}\circ\icC_{U_i})\circ\icC_{V_j}\\&=\sum a_{ij}(\icB_{(w_1,U_i)}\circ\icC_{V_j}-\icB_{(w_1)}\circ\icC_{U_i\Sis V_j}),
    \end{align}
    where $\icC_U\circ\icC_V=\icC_{U\Sis V}$ can be verified by definition directly.
\end{proof}

With the $\icBs$-part already proved, the $\icB$-part of \Cref{thm:ToddCMPLV} is an immediate consequence of \Cref{thm:BsB}.

\begin{remark}
    Chang, Chen and Mishiba have proved the $\cBs$-part of \Cref{thm:Todd} (see \cite[Theorem~5.3]{CCM2023}) by directly treating MZVs and power sums. Our method in this subsection can also be used to directly complement their proof.
\end{remark}

\section{Fixed relations}\label{sec:fixed}
We determine all fixed relations in this section. Based on our discussion in \Cref{sec:transfer}, we study $\iFix_w$, which leads to the same result for $\Fix_w$.

Let $\cR_w^{\Li}=\{\sum a_i\fs_i\in\cS_w^K: \Li(\sum a_i\fs_i)=0\}$ be the set of all $K$-linear relations of CMPLVs of weight $w$, then we have natural embedding $\iota: \iFix_w\hookrightarrow\cR_w^{\Li}$, and the map $\phi:\ibr_w\to\cR_w^{\Li}$ defined by $\phi(P,Q)=P+Q$.

For $\sum a_i\fs_i\in\iFix_w$, there is no binary relation $(P,Q)$ such that $\phi(P,Q)=\iota(\sum a_i\fs_i)$ with $P,Q\notin\iFix_w$. In fact, if $(\sum b_j\ft_j,\sum c_k\fu_k)\in\ibr_w$ satisfies 
\[\phi(\sum b_j\ft_j,\sum c_k\fu_k)=\iota(\sum a_i\fs_i),\]
then we have 
\[\sum a_i\fs_i=\sum b_j\ft_j+\sum c_k\fu_k,\]
by definition, we have
\begin{equation}
    \Si_d(\sum b_j\ft_j)+\Si_d(\sum c_k\fu_k)=0,
\end{equation}
and
\begin{equation}
    \Si_d(\sum b_j\ft_j)+\Si_{d+1}(\sum c_k\fu_k)=0,
\end{equation}
Subtracting these two equations, we have
\begin{equation}\label{eq:same}
    \Si_d(\sum c_k\fu_k)=\Si_{d+1}(\sum c_k\fu_k),
\end{equation}
\eqref{eq:same} holds for all $d\in\bbZ$, it can only be that $\Si_d(\sum c_k\fu_k)=0$ for all $d\in\bbZ$. Then we have $\sum b_j\ft_j,\sum c_k\fu_k\in\iFix_w$.

Now according to our result in \Cref{sec:proof}, there exist unique $b_j\in\K$ and $\ft_j\in\cI_w\setminus\BTTw$ such that 
\[\sum a_i\fs_i=\sum b_j(P_{\ft_j}+Q_{\ft_j}),\quad\text{or equivalently}\quad\iota(\sum a_i\fs_i)=\phi(\sum b_j\rho(\ft_j)).\]

Put $\sum b_j\ft_j=\sum c_k\fc_k+\sum d_l\fd_l$, where $\fc_k=(q+r_k,\fm_k)$ is of Type 0 or Type 3, and $\fd_l$ is of other types. The above discussion yields that
\begin{align*}
    0&=\sum c_k\Si_{d+1}(1,(q-1)\Sis(r_k,\fm_k))\\&=\Si_{d+1}(1)\Si_{<d+1}(q-1)\sum c_k\Si_{<d+1}(r_k,\fm_k).
\end{align*}

So we have $\sum c_k(r_k,\fm_k)\in\iFix_{w-q}$. Define $C:\iFix_{w-q}\to\iFix_w$ by \[C(\sum c_k(r_k,\fm_k))=\sum c_k((q+r_k,\fm_k)+(q,r_k,\fm_k)+D_1(1,(q-1)\Sis(r_k,\fm_k))),\] 
then there exist unique $\sum c_k(r_k,\fm_k)\in\iFix_{w-q}$, $d_l\in K$ and $\fd_l\in\cI_w$ of Type 1, Type 2 or Type 4, such that:
\[\sum a_i\fs_i=C(\sum c_k(r_k,\fm_k))+\sum d_lP_{\fd_l}\]

The above discussion leads to the following decomposition theorem:
\begin{theorem} For $w> q$
    \[\iFix_w= B_w\oplus C(\iFix_{w-q}),\]
    where $B_w=\Span_K\{P_{\fs}: \fs\in\cI_w\text{ is of Type 1, Type 2 or Type 4}\}$.
\end{theorem}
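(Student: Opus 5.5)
To prove the decomposition $\iFix_w= B_w\oplus C(\iFix_{w-q})$ I would check three things in turn: that both pieces lie in $\iFix_w$, that they together span $\iFix_w$, and that the sum is direct.

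\emph{Both pieces are fixed.} For $\fs$ of Type 1, 2 or 4, the relations \eqref{eq:typeBC} and \eqref{eq:typeB} have $Q_{\fs}=0$: every term there is evaluated at the same index $d$, because $\icBs_{\fn}$ with $\fn\neq\emptyset$ always outputs a relation with zero second component. Hence $P_{\fs}\in\iFix_w$, and so $B_w\subseteq\iFix_w$.

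Next take $X=\sum c_k(r_k,\fm_k)\in\iFix_{w-q}$. By linearity,
\[
C(X)=\sum_k c_k\,P_{(q+r_k,\fm_k)}+D_1(1,(q-1)\Sis X).
\]
Summing the relations \eqref{eq:typeC} gives $\Si_d(\sum c_kP_{(q+r_k,\fm_k)})=-D_1\Si_{d+1}(1,(q-1)\Sis X)$. The right-hand side factors as $\Si_{d+1}(1)\Si_{<d+1}(q-1)\Si_{<d+1}(X)$, which vanishes because $X$ is fixed. In the same way $\Si_d(1,(q-1)\Sis X)=0$. Therefore $\Si_d(C(X))=0$ for every $d$, so $C(X)\in\iFix_w$.

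\emph{Spanning.} This is essentially the discussion preceding the theorem. Let $\sum a_i\fs_i\in\iFix_w$. By the $\icBs$-part of Section~\ref{sec:proof}, the elements $P_{\ft}+Q_{\ft}$ for $\ft\in\cI_w\setminus\BTTw$ form a basis of $\cR_w^{\Li}$. So we can write $\sum a_i\fs_i=\phi(\sum b_j\rho(\ft_j))$. Split the $\ft_j$ into Type 0/3 tuples $\fc_k=(q+r_k,\fm_k)$ and all other types.

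The combination $\sum b_j\rho(\ft_j)$ is a binary relation whose image under $\phi$ is fixed. The argument around \eqref{eq:same} then shows its $Q$-part is fixed. That $Q$-part is $D_1\sum c_k(1,(q-1)\Sis(r_k,\fm_k))$. From the factorization $\Si_{d+1}(1)\Si_{<d+1}(q-1)\Si_{<d+1}(X)$ we get $\Si_{<d+1}(X)=0$ for all $d$, hence $\Si_d(X)=0$, i.e.\ $X\in\iFix_{w-q}$. One small point to check here is that $\Si_{d+1}(1)\Si_{<d+1}(q-1)$ is nonzero for the relevant $d$, which holds for $d\ge0$, and that this suffices. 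Regrouping the terms then gives $\sum a_i\fs_i=C(X)+\sum d_lP_{\fd_l}$.

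\emph{Directness.} This is the step needing the most care. Suppose $\sum d_lP_{\fd_l}+C(X)=0$ with $X\neq0$. Write $X$ in terms of distinct tuples $(r_k,\fm_k)$. Then $C(X)$ is the $\phi$-image of $\sum c_k\rho(q+r_k,\fm_k)$. The identity becomes a nontrivial linear dependence among the elements $P_{\ft}+Q_{\ft}$, $\ft\in\cI_w\setminus\BTTw$. Such a dependence is impossible: by the (depth, lex)-triangularity of $\rho$, these elements are linearly independent, since the leading tuple of $P_{\ft}+Q_{\ft}$ is $\ft$ itself. Hence all $c_k=0$, so $X=0$, and then all $d_l=0$ by the same independence. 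The same independence also shows that $C$ is injective and that the $P_{\fd_l}$ are linearly independent. I expect no real obstacle here beyond carefully recording that $C$ is exactly $\phi\circ\rho$ applied to Type 0/3 tuples, extended linearly.
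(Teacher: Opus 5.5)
Your proposal is correct and follows essentially the same route as the paper. You expand a fixed relation in the basis $\{P_{\ft}+Q_{\ft}\}$ from Section~3, use the argument around \eqref{eq:same} to force the $Q$-part (hence $X$) to be fixed, regroup the terms into $C(X)+\sum d_lP_{\fd_l}$, and obtain directness (and injectivity of $C$) from the (depth, lex)-triangularity. You also spell out points the paper leaves implicit, namely that both summands lie in $\iFix_w$ and that $\Si_{d+1}(1)\Si_{<d+1}(q-1)\neq0$ for $d\ge0$.
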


It remains to compute $\dim_K\iFix_w$, it's easy to see $f_w=0$ for $w\leq q$. For $w>q$, we set $f_w=\dim_K\iFix_w$ and $b_w=\dim_K B_w$, note that $C$ is injective, then
 \[f_w=b_w+f_{w-q},\]
since
\begin{align*}
b_w&=\#\{\fs\in\cI_w:\fs\text{ is of Type 1, Type 2 or Type 4}\}\\&=\#\cI_w-\#\BTTw-\#\{\fs\in\cI_w:\fs\text{ is of Type 0 or Type 3}\}\\&=2^{w-1}-d^{(K)}_w-2^{w-q-1},
\end{align*}
we have 
\begin{equation}\label{eq:feq}
    f_w=2^{w-1}-d^{(K)}_w-2^{w-q-1}+f_{w-q}.
\end{equation}

Set $w=mq+r$ with $0< r \leq q$, by iteration, we have
\begin{equation}
    f_w=2^{w-1}-2^{r-1}-\sum_{k=0}^{m-1}d^{(K)}_{w-kq},
\end{equation}
which gives a formula to calculate $f_w$.

We may also calculate the generating function of $f_w$. Set $r_w=2^{w-1}-d_w^{(K)}$, by \Cref{thm_CCMIKLNP}, we have 
\[\sum_{w=1}^{\infty}r_wx^w=\frac{x^q(1-x)^2}{(1-2x)(1-2x+x^{q+1})}\]

Multiply both sides of \eqref{eq:feq} by $x^w$, and sum over all $w> q$, we obtain:
\[\sum_{w=1}^{\infty}f_wx^w=\sum_{w=1}^{\infty}r_wx^w-x^q-\frac{x^{q+1}}{1-2x}+x^q\sum_{w=1}^{\infty}f_wx^w,\]
so
\begin{equation}
    \sum_{w=1}^{\infty}f_wx^w=\frac{x^{q+1}(1-x)}{(1-2x)(1-2x+x^{q+1})}.
\end{equation}

\section{All binary relations }\label{sec:binary}
As the final section of the paper, we determine $\ibr_w$.

Recall the map $\phi:\ibr_w\to\cR_w^{\Li}$ defined by $\phi(P,Q)=P+Q$ in \Cref{sec:fixed}, and \Cref{thm:ToddCMPLV} ensures that $\phi$ is surjective. So we just need to study $\ker{\phi}$.

\begin{theorem}\label{thm:ker}
    \[\ker{\phi}\cong \iFix_w.\]
    
    To be more precise, a binary relation $(P,Q)\in\ker{\phi}$ if and only if $P\in\iFix_w$ and $P=-Q$.
\end{theorem}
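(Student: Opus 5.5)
The argument mirrors the one already used in \Cref{sec:fixed} for showing that a fixed relation cannot be written through $\phi$ using non-fixed components. I would treat the two directions separately and then read off the isomorphism.

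\emph{Forward direction.} Let $(P,Q)\in\ker\phi$, so $P+Q=0$ in $\cS_w^K$, i.e.\ $Q=-P$. The binary relation condition says that for all $d\in\bbZ$
\[
\Si_d(P)+\Si_{d+1}(Q)=0,\quad\text{that is,}\quad \Si_d(P)=\Si_{d+1}(P).
\]
Hence the sequence $(\Si_d(P))_{d\in\bbZ}$ is constant in $d$. Since every tuple $\fs$ of positive weight has $\Si_d(\fs)=0$ for $d<\dep(\fs)-1$, and in particular for all sufficiently negative $d$, this constant must be $0$. Therefore $\Si_d(P)=0$ for all $d$, i.e.\ $P\in\iFix_w$, and $Q=-P$.

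The same conclusion follows from the other end. Each $\Si_d(\fs)$ is a sum of terms $1/(l_{k_1}^{s_1}\cdots)$ with $k_1=d$, and these tend to $0$ in $K_\infty$ as $d\to\infty$. Since $w\ge1$ excludes the empty tuple, either end suffices; the negative-$d$ vanishing is the cleanest.

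\emph{Converse.} If $P\in\iFix_w$ and $Q=-P$, then $\Si_d(P)=0$ and $\Si_{d+1}(Q)=-\Si_{d+1}(P)=0$ for all $d$. So $(P,Q)$ satisfies $R(d)$ for every $d$, hence lies in $\ibr_w$, and $\phi(P,Q)=P+Q=0$.

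\emph{The isomorphism.} Define $\iFix_w\to\ker\phi$ by $P\mapsto(P,-P)$. This map is $K$-linear and injective, and it is surjective by the forward direction. Its inverse is the projection $(P,Q)\mapsto P$.

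I do not expect a real obstacle here. The only point needing care is justifying that a $d$-independent sequence $\Si_d(P)$ must vanish, which is immediate from the stated convention $\Si_d(\fs)=0$ for $d<\dep(\fs)-1$, together with $w\ge1$ so that no empty tuple occurs in $P$.
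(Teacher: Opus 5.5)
Your proposal is correct and follows essentially the same route as the paper: set $Q=-P$, read off $\Si_d(P)=\Si_{d+1}(P)$ for all $d$ from the binary relation, and use $\Si_d(P)=0$ for $d<0$ to conclude $P\in\iFix_w$. The converse and the identification via $P\mapsto(P,-P)$ are then immediate, as you note; your alternative argument via $d\to\infty$ is also valid but not needed.
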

\begin{proof}
    By definition, $(P,Q)\in\ker{\phi}$ if and only if $P=-Q$. Obviously $(P,-P)\in\ker{\phi}$ for $P\in\iFix_w$. We only need to show that if $(P,-P)\in\ibr_w$, then $P\in\iFix_w$.

    In fact, since $(P,-P)\in\ibr_w$, we have $\Si_d(P)=\Si_{d+1}(P)$ for all $d\in\bbZ$. $\Si_d(P)=0$ for $d<0$, then it holds for all $d\in\bbZ$ by induction.
\end{proof}

Combining \Cref{thm:ker} and previous results, we have
\begin{theorem}
    \[\ibr_w\cong\cR_w^{\Li}\oplus\iFix_w,\]
    and
    \[\sum_{w=1}^{\infty}(\dim_K\ibr_w)x^w=\frac{x^q(1-x)}{(1-2x)(1-2x+x^{q+1})}.\]
\end{theorem}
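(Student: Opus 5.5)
The plan is to combine the surjectivity of $\phi$ with \Cref{thm:ker} via rank--nullity. Then I will read off the generating function from the one for fixed relations and the dimension formula of \Cref{thm_CCMIKLNP}.

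\emph{Surjectivity of $\phi$.} Let $(P,Q)\in\ibr_w$. Summing $R(d)$ over all $d\in\bbZ$ gives $\Li(P+Q)=0$, so $\phi$ lands in $\cR_w^{\Li}$. For the converse, \Cref{thm:ToddCMPLV}, specifically its $\icBs$-part proved in \Cref{sec:proof}, says that $\cR_w^{\Li}$ is spanned by the elements $\overline{\rho(\fs)}=P_{\fs}+Q_{\fs}=\phi(\rho(\fs))$ for $\fs\in\cI_w\setminus\BTTw$. Each $\rho(\fs)$ is itself a binary relation, obtained by applying the operators to $R_1^{\Si}$. Hence $\phi$ is surjective. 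For $w<q$ both sides vanish, and the argument is uniform in $w$.

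\emph{Exact sequence.} Using \Cref{thm:ker} we obtain the exact sequence
\[
0\to\iFix_w\xrightarrow{\;P\mapsto(P,-P)\;}\ibr_w\xrightarrow{\;\phi\;}\cR_w^{\Li}\to0 .
\]
All spaces are finite-dimensional $K$-spaces, so the sequence splits, non-canonically, by choosing a section, for instance $\overline{\rho(\fs)}\mapsto\rho(\fs)$ on the basis. This gives $\ibr_w\cong\cR_w^{\Li}\oplus\iFix_w$ and $\dim\ibr_w=\dim\cR_w^{\Li}+f_w$.

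\emph{Dimension of $\cR_w^{\Li}$.} Consider the evaluation map $\cS_w^K\to K_\infty$, $\fs\mapsto\Li(\fs)$. Its image is the $K$-span of CMPLVs of weight $w$. By \Cref{thm:SSiequ} this span equals $\cZ_w^{(K)}$, which has dimension $d_w^{(K)}$. Its kernel is $\cR_w^{\Li}$, so
\[
\dim\cR_w^{\Li}=2^{w-1}-d_w^{(K)}=r_w,
\]
whose generating function was recorded in \Cref{sec:fixed} as $x^q(1-x)^2/((1-2x)(1-2x+x^{q+1}))$.

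\emph{Generating function.} Adding the formula for $\sum f_wx^w$ gives
\[
\frac{x^q(1-x)\bigl((1-x)+x\bigr)}{(1-2x)(1-2x+x^{q+1})}=\frac{x^q(1-x)}{(1-2x)(1-2x+x^{q+1})},
\]
as claimed.

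The only nontrivial point is surjectivity. This relies on the relations $\rho(\fs)$ genuinely spanning $\cR_w^{\Li}$, which was established in \Cref{sec:proof}; everything else is linear algebra.
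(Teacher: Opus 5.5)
Your proposal is correct and takes essentially the paper's route: surjectivity of $\phi$ from \Cref{thm:ToddCMPLV}, identification of $\ker\phi$ with $\iFix_w$ via \Cref{thm:ker}, then rank--nullity with $\dim\cR_w^{\Li}=2^{w-1}-d_w^{(K)}$ and adding the two generating functions. You spell out the splitting and the dimension count that the paper leaves implicit in ``combining \Cref{thm:ker} and previous results.''
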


The following corollary was suggested by Li Lai, which provides an alternative perspective on the relationship between $\ibr$ and $\iFix$:
\begin{corollary}
    $\icB_{(1)}$ gives an isomorphism from $\ibr_{w}$ to $\iFix_{w+1}$.
\end{corollary}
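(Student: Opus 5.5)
The plan is to show that $\icB_{(1)}$ maps $\ibr_w$ into $\iFix_{w+1}$, then that it is injective, and finally to obtain surjectivity from the dimension formulas proved above.

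\textbf{The image consists of fixed relations.} Take $R=(P,Q)\in\ibr_w$. By definition $\icB_{(1)}(R)(d)=\Si_d(1)\sum_{j<d}R(j)$. Each $R(j)$ is the identity $\Si_j(P)+\Si_{j+1}(Q)=0$, so
\[
\sum_{j<d}\bigl(\Si_j(P)+\Si_{j+1}(Q)\bigr)=\Si_{<d}(P)+\Si_{<d+1}(Q)=0,
\]
and this vanishing holds for every $d$. By \eqref{eq:defBSi},
\[
\icB_{(1)}(P,Q)=\bigl((1)\Sit P+(1)\Sic Q+(1)\Sit Q,\,0\bigr).
\]
Its second component is zero, so it is a fixed relation of weight $w+1$. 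Thus $\icB_{(1)}$ restricts to a $K$-linear map $\ibr_w\to\iFix_{w+1}$.

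\textbf{Injectivity.} I will work on the level of formal sums, using the identification $\ibr_w\subset\cS_w^K\oplus\cS_w^K$ from \eqref{eq:auto2}. From the definitions of $\Sit$ and $\Sic$, for a tuple $\ft=(t_1,\ft^-)$ we have
\[
(1)\Sit\ft=(1,\ft),\qquad (1)\Sic\ft=(1+t_1,\ft^-).
\]
Hence the image is
\[
\sum(\text{tuples }(1,\fs)\text{ from }P+Q)\;+\;\sum(\text{tuples }(1+t_1,\ft^-)\text{ from }Q).
\]
Tuples in the first group begin with $1$, and tuples in the second group begin with an entry $\ge2$, so the two groups lie in complementary coordinate subspaces. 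Now suppose the image is $0$. The second group must vanish. The map $\ft\mapsto(1+t_1,\ft^-)$ is injective on tuples, so $Q=0$. The first group must then vanish, giving $P+Q=0$ because $\fs\mapsto(1,\fs)$ is injective; hence $P=0$ as well. So $\icB_{(1)}$ is injective.

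\textbf{Surjectivity.} This follows by comparing dimensions, using the two generating functions established above. The generating series of $\dim_K\iFix_w$ is $\frac{x^{q+1}(1-x)}{(1-2x)(1-2x+x^{q+1})}$, and that of $\dim_K\ibr_w$ is $\frac{x^q(1-x)}{(1-2x)(1-2x+x^{q+1})}$. Multiplying the second by $x$ gives the first, so $\dim_K\ibr_w=\dim_K\iFix_{w+1}$ for every $w\ge1$. Since all these spaces are finite-dimensional, the injective map $\icB_{(1)}$ is an isomorphism.

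\textbf{Main obstacle.} The main step is the dimension equality, and that is already supplied by the preceding theorems; the injectivity is elementary. If one wanted an argument independent of those counts, the harder task would be to construct the inverse directly. Given a fixed relation $F$ of weight $w+1$, one would split $F$ according to whether its tuples begin with $1$ and read off $P+Q$ and $Q$ from the two parts. One would then have to check that $\Si_d(1)\bigl(\Si_{<d}(P)+\Si_{<d+1}(Q)\bigr)=0$ forces $R(d)=0$ for all $d$. For that I would use that $\Si_d(1)\neq0$ for $d\ge0$ and take differences in $d$.
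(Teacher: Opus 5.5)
Your proof is correct and follows the paper's argument essentially step for step: the image consists of fixed relations by \eqref{eq:defBSi}, injectivity comes from separating tuples that begin with $1$ from those whose first entry is at least $2$, and surjectivity comes from $\dim_K\ibr_w=\dim_K\iFix_{w+1}$ read off from the two generating functions. As an aside, the direct inverse you sketch at the end does go through, because $\Si_d(1)=1/l_d\neq0$ for $d\ge0$ and both sides vanish trivially for $d<0$, so surjectivity could be obtained without the dimension count.
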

\begin{proof}
    By \eqref{eq:defBSi}, the image of $\ibr_w$ under $\icB_{(1)}$ lies in $\iFix_{w+1}$, so this map is well‑defined. Generating functions show that these two spaces have the same dimension. It remains to show that $\icB_{(1)}$ is injective.

    For $\fc=(c_1,\fc^-)\in\cI_{w}$, set $\prescript{+}{}{\fc}=(c_1+1,\fc^-)$. For $(\sum b_j\fb_j,\sum c_k\fc_k)\in\ibr_{w}$, we have
    \begin{align*}
        \icB_{(1)}\Bigl(\sum b_j\fb_j,\sum c_k\fc_k\Bigr)
        &=\biggl(\sum b_j(1,\fb_j)+\sum c_k\prescript{+}{}{\fc_k}+\sum c_k(1,\fc_k),\,0\biggr)\\
        &=\biggl(\Bigl(1,\sum b_j\fb_j+\sum c_k\fc_k\Bigr)+\sum c_k\prescript{+}{}{\fc_k},\,0\biggr).
    \end{align*}
    Suppose that
    \[
    \Bigl(1,\sum b_j\fb_j+\sum c_k\fc_k\Bigr)+\sum c_k\prescript{+}{}{\fc_k}=0.
    \]
    Since the first component of each $\prescript{+}{}{\fc_k}$ is greater than $1$, we must have $\sum c_k\prescript{+}{}{\fc_k}=0$, which in turn implies $\sum c_k\fc_k=0$ and $\sum b_j\fb_j=0$.
\end{proof}

\vspace*{3mm}
\begin{flushright}
\begin{minipage}{148mm}\sc\footnotesize
J.\,Hu: School of Mathematical Sciences, Fudan University, Shanghai, China \\
{\it E-mail address}: \href{mailto:jinyuanhu2004@gmail.com}{{\tt jinyuanhu2004@gmail.com}}
\vspace*{3mm}
\end{minipage}
\end{flushright}

\end{document}